\address{Department of Mathematics,
University of British Columbia,
Vancouver,  B.C., V6T 1Z2  CANADA}
\email{akhtari@math.ubc.ca}
\subjclass[2000]{11D25, 11D41, 11B39, 11J25}
\begin{document}

\newtheorem{thm}{Theorem}[section]
\newtheorem{prop}[thm]{Proposition}
\newtheorem{lemma}[thm]{Lemma}
\newtheorem{cor}[thm]{Corollary}
\newtheorem{conj}[thm]{Conjecture}

\begin{center}
\Large{The Diophantine  equation $aX^{4} - bY^{2} = 1$}
\end{center}

\bigskip

\begin{center}
Shabnam Akhtari
\end{center}

\bigskip

Abstract.
As an application of the method of Thue-Siegel, we will resolve a conjecture of Walsh to the effect that the Diophantine equation $aX^{4} - bY^2=1$, for fixed positive integers $a$ and $b$, possesses at most two solutions in positive integers $X$ and  $Y$. Since there are infinitely many pairs $(a,b)$ for which two such solutions exist, this result is sharp.

\section{Introduction}

In a series of papers over nearly forty years, Ljunggren (see e.g. \cite{Lj1}, \cite{Lj2}, \cite{Lj3}, \cite{Lj4} and \cite{Lj5}) derived  remarkable sharp bounds for the number of solutions to various quartic Diophantine equations, particularly those of the shape
\begin{equation} \label{zero}
aX^4 - bY^2 = \pm 1,
\end{equation}
typically via sophisticated application of Skolem's $p$-adic method. More recently, there has been a resurgence of interest in Ljunggren's work; results along these lines are well surveyed in the paper of Walsh \cite{8}. By way of example, using lower bounds for linear forms in logarithms, together with an assortment of elementary arguments, Bennett and Walsh \cite{1}  showed that the equation 
\begin{equation}\label{1.2}
aX^{4} - bY^{2} = 1
\end{equation}
has at most one solution in positive integers $X$ and $Y$, when $a$ is an integral square and $b$ is a positive integer. For general $a$ and $b$, however, there is no absolute upper bound for the number of integral solutions to (\ref{1.2}) available in the literature, unless one makes strong additional assumptions (see e.g.  \cite{1}, \cite{2}, \cite{Che}, \cite{Coh}, \cite{Lj4}, \cite{Lj5} and \cite{wal}). This lies in sharp contrast to the situation for the apparently similar equation
\begin{equation} \label{muh}
aX^4-bY^2=-1
\end{equation}
where Ljunggren \cite{Lj2} was able to bound the number of positive integral solutions by $2$ for arbitrary fixed $a$ and $b$. Moreover, it appears that the techniques employed to treat equation (\ref{muh}) and, in special cases, (\ref{1.2}), do not lead to results for (\ref{1.2}) in general.

It is our goal in this paper to rectify this situation. To be precise, we will prove the following
\begin{thm}\label{main}
Let $a$ and $b$ be positive integers. Then equation (\ref{1.2})
has at most two solutions in positive integers $(X,Y)$.
\end{thm}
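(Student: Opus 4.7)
My plan is to reframe the equation via its natural Pell structure and then apply the Thue--Siegel hypergeometric machinery. Setting $U = X^2$, the equation $aU^2 - bY^2 = 1$ is a Pell-type equation whose positive integer solutions $(U_n, Y_n)$ (if any exist) form a finite union of strictly increasing sequences, each obtained from a fundamental solution by multiplication by the fundamental unit $\epsilon > 1$ of the order $\mathbb{Z}[\sqrt{ab}]$. Theorem \ref{main} then reduces to the assertion that at most two terms $U_n$ of these sequences are perfect squares. The fact that consecutive $U_n$ within one orbit grow at least by the factor $\epsilon \ge 1 + \sqrt{ab}$ provides a gap principle that will eventually be played against the approximation estimates of the next step.

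Second, I would exploit the fact that any solution of \eqref{1.2} produces an extraordinarily good rational approximation to $\sqrt{b/a}$. Indeed,
\[
\bigl|\sqrt{a}\,X^2 - \sqrt{b}\,Y\bigr| \;=\; \frac{1}{\sqrt{a}\,X^2 + \sqrt{b}\,Y} \;\ll\; X^{-2},
\]
so $X^2/Y$ approximates $\sqrt{b/a}$ with error $O(Y^{-2} X^{-2})$. Although $\sqrt{b/a}$ itself is only quadratic, the extra square structure $U = X^2$ effectively lifts the problem into the quartic field $\mathbb{Q}(\sqrt[4]{b/a})$, where genuine Thue-type results have bite. Concretely, I would construct Pad\'e approximants to the hypergeometric function $(1-z)^{1/2}$ (or to $(1-z)^{1/4}$, depending on how one factors), specialise the parameter $z$ at a value built from two hypothetical solutions $(X_i, Y_i)$ and $(X_j, Y_j)$, and derive from the resulting identity a nonzero integer bounded strictly between $-1$ and $1$ --- the standard Thue--Siegel contradiction. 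This produces an explicit upper bound on the number of "large" solutions, in terms of $a, b$ and the ratio of consecutive solution sizes.

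Third, I would combine this Thue--Siegel upper bound with the gap principle from Step 1: three square values $X_1^2 < X_2^2 < X_3^2$ in the Pell sequence would force $X_3 \gg \epsilon \cdot X_2 \gg \epsilon^2 \cdot X_1$, which inserted into the hypergeometric inequality excludes the third solution. The case where $\epsilon$ is small (so the gap principle is weakest) would be treated separately: either by elementary congruence obstructions modulo small primes, by a $2$-adic descent exploiting the parity of $Y$ and the quartic structure of $X$, or by appealing to the known square-$a$ case from \cite{1} and its generalisations in \cite{8}.

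The main obstacle is the numerical tightness. Because the bound "$\le 2$" is best possible (infinitely many pairs $(a,b)$ realise equality), the hypergeometric estimates, the gap principle, and their interplay must all be carried out essentially without slack; any lossy constant would yield only "$\le 3$" or worse. A secondary difficulty is that the square-admissibility condition may distribute solutions across several Pell classes at once, so the gap argument cannot be applied orbit by orbit in isolation --- one needs a uniform version that controls how many distinct orbits can simultaneously contain a square $U_n$, typically by means of a subtle analysis of norm forms in $\mathbb{Q}(\sqrt{ab})$ and an accompanying computational verification for small values of $ab$.
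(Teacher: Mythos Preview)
Your high-level plan --- Pell structure plus Thue--Siegel hypergeometric method --- matches the paper's philosophy, but the route you sketch has a genuine gap at the point where the quartic approximation problem is supposed to appear.

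In your Step 2 you observe that a solution yields $|X^2/Y - \sqrt{b/a}| \ll Y^{-2}$. This is an approximation of order $q^{-2}$ to a \emph{quadratic} irrational, which every convergent of its continued fraction already achieves; no Thue--Siegel argument can extract a finiteness statement from it. You then say the square structure ``effectively lifts the problem into $\mathbb{Q}(\sqrt[4]{b/a})$'', but you do not produce a rational approximation to any quartic irrational: from $aX^4 - bY^2 = 1$ one gets $a^{1/4}X \approx b^{1/4}\sqrt{Y}$, and $\sqrt{Y}$ is not rational. Without an explicit quartic Thue equation (or an equivalent linear form in two algebraically independent logarithms), the Pad\'e machinery for $(1-z)^{1/4}$ has nothing to bite on.

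The paper closes this gap by two reductions you do not have. First, via Proposition~\ref{1.1} it reduces to the one-parameter family $(t+1)X^4 - tY^2 = 1$. Second, and crucially, Proposition~\ref{tvw} (following Togb\'e--Voutier--Walsh) converts each nontrivial solution into a coprime solution $(x,y)$ of the genuine quartic Thue inequality $0 < P(x,y) \le t^2$ with $xy > 64t^3$, where $P$ is an explicit irreducible quartic form. Only then do resolvent forms $\xi,\eta$ and the Pad\'e approximants to $(1-z)^{1/4}$ come into play. The bound ``at most two'' is finally obtained not from your multiplicative Pell gap $X_{i+1} \gg \epsilon X_i$ (which is too weak), but from (i) a bootstrapped polynomial gap $|\xi_2| \gg |\xi_1|^{4r+3}$ for all $r$, forcing at most one solution per fourth root of unity, and (ii) a separate argument (end of Section~7) showing that all solutions of the constrained inequality are associated to a \emph{single} fourth root of unity. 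Your plan contains no analogue of (ii), and your proposed fallback for small $\epsilon$ (congruences, $2$-adic descent) is not what is needed --- the paper instead relies on explicit computation for $t \le 204$ and on Lemma~\ref{711} to rule out $V_7$ and $V_{11}$ being squares. Finally, your concern about multiple Pell orbits is unfounded: for $aU^2 - bY^2 = 1$ there is a single orbit of positive solutions.
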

This resolves a conjecture of Walsh (see \cite{1}, \cite{2}, \cite{Coh} and \cite{8}), which had been suggested by computations and assorted heuristics. Since there are infinitely many pairs $(a,b)$ for which two such solutions exist (see Section \ref{sec2}), this result is best possible.

To prove this, we will appeal to classical results of Thue \cite{thu38} from the theory of Diophantine approximation, together with modern refinements, particularly those of Evertse \cite{Ev}.  Such an approach, based on Pad\'e approximation to binomial functions, has been used in a number of previous works to explicitly solve Thue inequalities and equations (see e.g. \cite{2}, \cite{Che}, \cite{Heu1}, \cite{wal}, \cite{Wak0}, \cite{Wak1} ) or to bound the number of such solutions (see e.g. \cite{Ben}, \cite{Ev}, \cite{Kre}). We will apply similar techniques to a certain family of quartic inequalities.

\section{An Equivalent Problem} \label{sec2}

Let $a$ denote a non-square positive integer, and $b$ a positive integer for which the quadratic equation
\begin{equation}\label{1.3}
aX^{2} - bY^{2} = 1
\end{equation}
is solvable in positive integers $X$ and $Y$. Let $(v , w)$ be a pair of positive solutions to (\ref{1.3}) so that 
$$
\tau = v\sqrt{a} + w\sqrt{b} > 1,
$$
and $\tau$ is minimal with this property. All solutions in positive integers of (\ref{1.3}) are given by $(v_{2k+1},w_{2k+1})$, where
$$
\tau^{2k+1} = v_{2k+1}\sqrt{a} + w_{2k+1}\sqrt{b} \qquad  (k \geq 0) 
$$
(see \cite{Walk} for a proof). Solving the quartic equation (\ref{1.2}) is thus equivalent to the problem of determining all squares in the sequence $\{v_{2k+1}\}$. One can find a proof of the following result in \cite{Rot}.
\begin{prop}\label{1.1}
If $v_{2k+1}$ is a square for some $k \geq 0$, then $v_{1}$ is also a square.
\end{prop}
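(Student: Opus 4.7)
The plan is to produce an explicit factorization $v_{2k+1} = v_1 \cdot S_k(av_1^2)$ that exhibits $v_1$ as a nearly coprime factor, and then apply unique factorization. Because $\tau\bar\tau = av_1^2 - bw_1^2 = 1$, the conjugate $\bar\tau = v_1\sqrt a - w_1\sqrt b$ satisfies $\bar\tau = \tau^{-1}$, so
\[
2\,v_{2k+1}\sqrt a \;=\; \tau^{2k+1} + \tau^{-(2k+1)} \;=\; 2\,T_{2k+1}\!\Bigl(\tfrac{\tau + \tau^{-1}}{2}\Bigr) \;=\; 2\,T_{2k+1}(v_1\sqrt a),
\]
where $T_n$ is the Chebyshev polynomial of the first kind. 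Since $T_{2k+1}$ is an odd polynomial, write $T_{2k+1}(x) = x\,S_k(x^2)$ with $S_k\in\mathbb{Z}[y]$; substituting $x = v_1\sqrt a$ and dividing by $\sqrt a$ gives the integer identity
\[
v_{2k+1} \;=\; v_1 \cdot S_k(a v_1^2).
\]

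Next I would compute $S_k(av_1^2)$ modulo $v_1$. Every nonconstant monomial in $S_k(av_1^2)$ carries a factor of $v_1^2$, so $S_k(av_1^2) \equiv S_k(0) \pmod{v_1^2}$, and $S_k(0) = T_{2k+1}'(0) = (-1)^k(2k+1)$. Hence
\[
\gcd\!\bigl(v_1,\; S_k(a v_1^2)\bigr) \;\bigm|\; 2k+1.
\]
If $\gcd(v_1,2k+1)=1$, then the factors $v_1$ and $S_k(av_1^2)$ are coprime; since $v_{2k+1}$ is assumed to be a perfect square, both factors must themselves be perfect squares, and the conclusion follows at once.

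The remaining case, in which some odd prime $p$ divides $\gcd(v_1,2k+1)$, is the main obstacle: one cannot conclude coprimality of the two factors, and a finer $p$-adic analysis is required. Here I would exploit the explicit Chebyshev coefficients (for instance, the coefficient of $y$ in $S_k(y)$ is $(-1)^{k-1}\tfrac{2}{3}k(k+1)(2k+1)$) to pin down $v_p(S_k(av_1^2))$ precisely, and then combine this with the Pell relation $bw_1^2 = av_1^2-1$ to derive a congruence obstruction modulo a small auxiliary modulus (classically $4$, via $av_1^2 \equiv 0 \pmod{4}$ whenever $p=3$ and $v_1$ is even-free of $p$) that forbids $S_k(av_1^2)/p$ from being a square unless $v_p(v_1)$ has the correct parity. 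This Rotkiewicz-style descent must be carried out for each prime $p\mid\gcd(v_1,2k+1)$, and is the heart of the argument in \cite{Rot}; I expect the delicate coupling of the Chebyshev coefficient structure with the underlying Pell identity to be the step requiring the most care.
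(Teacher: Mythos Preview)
The paper does not prove this proposition at all; it simply states the result and refers the reader to Rotkiewicz \cite{Rot}. So there is no ``paper's own proof'' to compare against, and your attempt must be judged on its own merits.

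Your Chebyshev identity $v_{2k+1}=v_1\,S_k(av_1^2)$ is correct, as is the computation $S_k(av_1^2)\equiv S_k(0)=(-1)^k(2k+1)\pmod{v_1^2}$, and the coprime case $\gcd(v_1,2k+1)=1$ is indeed trivial from this. But this is the easy half of the argument, and the proposal does not contain a proof of the remaining half: when a prime $p$ divides both $v_1$ and $2k+1$, you explicitly defer to a ``Rotkiewicz-style descent'' that you do not carry out. The sketch you offer in its place --- computing $v_p(S_k(av_1^2))$ from the linear coefficient of $S_k$ and invoking an unspecified ``congruence obstruction modulo $4$'' --- is not an argument. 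In particular, knowing only the constant and linear terms of $S_k$ gives $v_p(S_k(av_1^2))$ only when $2v_p(v_1)<v_p(2k+1)$ or similar inequalities hold, and there is no reason these should be satisfied; the parenthetical about ``$av_1^2\equiv 0\pmod 4$ whenever $p=3$ and $v_1$ is even-free of $p$'' is not a coherent claim, and nothing links it to the parity of $v_p(v_1)$.

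The actual argument in \cite{Rot} is of a rather different flavour: it proceeds via Jacobi-symbol manipulations on Lehmer sequences, not a direct $p$-adic valuation count. If you want an alternative route, one that stays closer to your factorisation is to show the stronger divisibility statement that $v_m\mid v_n$ forces $m\mid n$ in this sequence, then reduce to the case where $2k+1$ is prime and handle that case by a quadratic-residue argument; but as written, the non-coprime case --- which is the entire content of the proposition --- remains unproved.
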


Let us assume that equation (\ref{1.2}) is solvable. Proposition \ref{1.1} implies that $\tau = \tau(a , b)$ is of the form $\tau = x^{2}\sqrt{a} + w\sqrt{b}$. We have
$$\tau = \sqrt{t+1} + \sqrt{t} ,$$
where $t = ax^{4} - 1$. Thus, for $k \geq 0$ 
$$\tau^{2k+1} = V_{2k+1} \sqrt{t+1} + W_{2k+1}\sqrt{t} , $$
where $V_{2k+1} = \frac{v_{2k+1}}{v_{1}}$. Hence, by Proposition \ref{1.1}, $v_{2k+1}$ is a square if and only if $V_{2k+1}$ is a square.  In other words, in order to bound the number of positive integer solutions to an equation of the form $aX^{4} - bY^{2} = 1$, it is sufficient to determine an upper bound for the number of integer solutions to Diophantine equations of the shape
\begin{equation}\label{1.4}
(t+1)X^{4} - tY^{2} = 1.
\end{equation}
The main result of \cite{2} is the following.
\begin{prop}\label{btw}
Let $m$ be a positive integer. Then the only positive integral solutions to the equation 
$$(m^{2}+m+1)X^{4} - (m^{2}+m)Y^{2} = 1$$
are given by $(X , Y) = (1 , 1)$ and $(X , Y) = (2m+1, 4m^{2}+4m+3)$.
\end{prop}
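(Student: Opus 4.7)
Setting $t = m^2+m$, Proposition \ref{btw} is the case $t = m(m+1)$ of the family (\ref{1.4}). Let $\tau = \sqrt{t+1}+\sqrt{t}$ and write $\tau^{2k+1} = V_{2k+1}\sqrt{t+1}+W_{2k+1}\sqrt{t}$, so that (by the discussion preceding (\ref{1.4})) the positive solutions $(X,Y)$ of the quartic are in bijection with indices $k \ge 0$ for which $V_{2k+1}$ is a perfect square, via $X^2 = V_{2k+1}$ and $Y = W_{2k+1}$. A direct computation from $\tau^2 = (2t+1)+2\sqrt{t(t+1)}$ gives $V_1 = 1$ and $V_3 = 4t+1 = (2m+1)^2$, reproducing the two claimed solutions. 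The task is therefore to show that $V_{2k+1}$ fails to be a square for every $k \ge 2$.

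My plan is to combine the Thue-Siegel-Pad\'e method with a gap principle coming from the Pell structure. Any genuine solution of $(t+1)X^4-tY^2=1$ satisfies
$$\left|X^2\sqrt{t+1}-Y\sqrt{t}\right| \;=\; \frac{1}{X^2\sqrt{t+1}+Y\sqrt{t}} \;<\; \frac{1}{2X^2\sqrt{t+1}},$$
so that $Y/X^2$ approximates $\sqrt{(t+1)/t}$ to accuracy $O(X^{-4})$. The Pad\'e approximants to $(1-z)^{1/2}$ at $z = 1/(t+1)$, in the explicit hypergeometric form refined by Evertse \cite{Ev} and Chen \cite{Che}, furnish an effective irrationality measure for $\sqrt{(t+1)/t}$ as soon as $t$ is sufficiently large, forbidding approximations much better than $q^{-\lambda}$ for some $\lambda<4$. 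On the other hand, the recurrence $V_{2k+3} = (4t+2)V_{2k+1} - V_{2k-1}$ forces $V_{2k+1} \sim \tau^{2k+1}/(2\sqrt{t+1})$, yielding a gap principle that separates successive square values of $V_{2k+1}$ by a factor of at least $\tau^2 \sim 4t$. Feeding the irrationality measure against this geometric gap eliminates every $k \ge 2$ once $m$ exceeds an explicit threshold $m_0$.

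The main obstacle, as is typical for results of this flavour, will be the finitely many $m \le m_0$ outside the range where the Pad\'e estimates take over, since the hypergeometric constants degrade as $t$ shrinks. Two complementary tools should close this range. First, sharp lower bounds for linear forms in the two logarithms $\log(V_{2k+1}\sqrt{t+1}+W_{2k+1}\sqrt{t})$ and $\log\tau$, in the style of Bennett-Walsh \cite{1}, make each small $m$ tractable one at a time. Second, local obstructions extracted from the recurrence are often decisive: for fixed small primes $p$, the sequence $V_{2k+1}\pmod{p}$ is periodic in $k$ and typically avoids the quadratic residues for all $k\ge 2$, ruling out squareness by a short congruence argument. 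Optimising the hypergeometric degree so that the analytic threshold $m_0$ is small enough for these elementary methods to cover the remainder is, I expect, where most of the genuine work of the proof will lie.
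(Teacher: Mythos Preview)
The paper does not prove Proposition~\ref{btw} at all: it is quoted as ``the main result of \cite{2}'' and used as a black box. So any comparison of approaches is moot from the paper's side.

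That said, your proposed strategy has a genuine gap. You plan to exploit the approximation
\[
\left|\,\frac{Y}{X^{2}}-\sqrt{\tfrac{t+1}{t}}\,\right|\ll X^{-4}
\]
together with Pad\'e approximants to $(1-z)^{1/2}$ in order to obtain an effective irrationality measure for the \emph{quadratic} irrational $\sqrt{(t+1)/t}$. But this cannot separate the square values of $V_{2k+1}$ from the non-square ones: \emph{every} Pell solution $(V_{2k+1},W_{2k+1})$ of $(t+1)u^{2}-tv^{2}=1$ already produces an approximation $W_{2k+1}/V_{2k+1}$ to $\sqrt{(t+1)/t}$ of exactly this quality, namely $|W_{2k+1}/V_{2k+1}-\sqrt{(t+1)/t}|\asymp V_{2k+1}^{-2}$. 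With denominator $q=X^{2}=V_{2k+1}$ your approximation is of order $q^{-2}$, and since a quadratic irrational has irrationality measure exactly~$2$ (attained infinitely often by its convergents), no hypergeometric bound of the shape $|p/q-\alpha|\gg q^{-\lambda}$ with $\lambda<4$ can exclude anything here. The condition ``$V_{2k+1}$ is a square'' is invisible to the quadratic approximation picture.

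What is actually needed is to use the squareness to pass to a genuinely \emph{quartic} Diophantine problem. This is precisely the reduction carried out in Section~\ref{sec3} (following \cite{wal}): a solution with $V_{2k+1}=X^{2}$ produces coprime integers $(x,y)$ satisfying the quartic Thue equation $P(x,y)=t_{1}^{2}$, and the hypergeometric machinery (Pad\'e approximants to $(1-z)^{1/4}$, not $(1-z)^{1/2}$; see Lemma~\ref{hyp}) is then applied to the degree-$4$ resolvent forms $\xi,\eta$. The proof in \cite{2} proceeds along these lines for the specific parameter $t=m^{2}+m$. Your references \cite{Ev} and \cite{Che} in fact work with quartic (or cubic) forms in exactly this way, so the citation is apt but the exponent $1/2$ and the target $\sqrt{(t+1)/t}$ in your sketch should be replaced by $1/4$ and a suitable degree-$4$ quantity.
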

In fact, these are the only values of $t$ for which equation (\ref{1.4}) is known to have as many as two positive solutions (suggesting a stronger version of Theorem \ref{main}).
Note that if $V_{3} = z^{2}$, where $z$ is a positive integer, then since $V_{3} = 1+4t$, we have
$$
4t= z^{2} - 1 = (z - 1) (z+ 1) 
$$
and therefore there exist positive integers $m$ and $n$ such that $t = mn$, $2m=z-1$ and $2n=z+1$. We conclude, therefore, that $n= m+1$ and $t= m^{2} +m$.  Proposition \ref{btw} thus implies the following.
\begin{cor}\label{me}
If $V_{3}$ is a square then for any $k >1$, $V_{2k+1}$ is not a square and there are  only $2$ solutions to equation (\ref{1.4}) in positive integers $X$ and $Y$.
\end{cor}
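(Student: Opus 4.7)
The plan is to observe that the hypothesis $V_3 = z^2$ converts equation (\ref{1.4}) into precisely the Diophantine equation treated by Proposition \ref{btw}, after which the conclusion is almost immediate.

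The first step is already carried out in the paragraph preceding the corollary: writing $V_3 = 4t+1 = z^2$ and factoring $4t = (z-1)(z+1)$ forces $t = m^2+m$ for some positive integer $m$. Substituting into (\ref{1.4}) yields
\[
(m^2+m+1)X^4 - (m^2+m)Y^2 = 1,
\]
which is exactly the equation addressed in Proposition \ref{btw}. That proposition asserts that the only positive integral solutions are $(X,Y) = (1,1)$ and $(X,Y) = (2m+1, 4m^2+4m+3)$, so under our hypothesis (\ref{1.4}) has at most, and in fact exactly, two positive integer solutions.

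The remaining step is to translate this count back into a statement about the sequence $\{V_{2k+1}\}$. Here I would invoke the correspondence implicit in the discussion before Proposition \ref{btw}: since every positive integer solution of the Pell equation $(t+1)X^2 - tY^2 = 1$ is of the form $(V_{2k+1}, W_{2k+1})$, the positive integer solutions of the quartic (\ref{1.4}) are in bijection with indices $k \geq 0$ for which $V_{2k+1}$ is a square, via $(X,Y) = (\sqrt{V_{2k+1}}, W_{2k+1})$. The index $k=0$ gives the solution $(1,1)$ since $V_1 = 1$, and the index $k=1$ gives $(2m+1, 4m^2+4m+3)$ since a direct computation from $\tau = \sqrt{t+1}+\sqrt{t}$ yields $V_3 = 4t+1 = (2m+1)^2$ and $W_3 = 4t+3 = 4m^2+4m+3$. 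Because Proposition \ref{btw} exhausts the solution set with just these two pairs, no further $V_{2k+1}$ with $k > 1$ can be a square, which is precisely the corollary.

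The only place warranting any real care is the bijection between positive integer solutions of (\ref{1.4}) and square values in $\{V_{2k+1}\}$; however, this is already established in the setup preceding Proposition \ref{btw}, so I would simply cite it rather than re-derive it. All remaining manipulations are routine arithmetic.
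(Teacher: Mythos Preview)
Your proposal is correct and follows essentially the same approach as the paper: the paper's proof consists solely of the paragraph preceding the corollary (deducing $t=m^2+m$ from $V_3$ being square) together with the sentence ``Proposition~\ref{btw} thus implies the following,'' and you have reproduced exactly this argument with the bijection between solutions of (\ref{1.4}) and square values of $V_{2k+1}$ made explicit. Your verification that $k=0$ and $k=1$ account for the two solutions named in Proposition~\ref{btw} is a useful sanity check that the paper leaves implicit.
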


As it transpires, we will need to account for the possibility of $V_{2k+1}$ being square, for odd values of $k$. The preceding result handles the case $k=1$. For $k=3$ and $k=5$, we will appeal to
\begin{lemma}\label{711}
If $t > 204$, then neither $V_7$ nor $V_{11}$ is an integral square.
\end{lemma}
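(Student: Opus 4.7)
The plan is to convert each condition $V_7 = y^2$ and $V_{11} = y^2$ into a Diophantine problem about factoring $y^2 - 1$, and then bound $t$ by an elementary case analysis on the shape of the factorization. Using the recurrence $V_{2k+3} = 2(2t+1) V_{2k+1} - V_{2k-1}$ with $V_1 = 1$ and $V_3 = 4t+1$, which one reads off from $\tau^{2k+3} = (\tau^2 + \bar{\tau}^2)\tau^{2k+1} - \tau^{2k-1}$, I compute
\begin{align*}
V_7 &= 64t^3 + 80 t^2 + 24 t + 1, \\
V_{11} &= 1024 t^5 + 2304 t^4 + 1792 t^3 + 560 t^2 + 60 t + 1,
\end{align*}
and verify by direct polynomial division the key factorizations
\begin{align*}
V_7 - 1 &= 8 t (2t+1)(4t+3), \\
V_{11} - 1 &= 4 t (4t+1)(4t+3)(16 t^2 + 20 t + 5),
\end{align*}
the quadratic $16t^2+20t+5$ being irreducible over $\mathbb{Q}$.

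Since $V_7$ and $V_{11}$ are each odd, any square solution $y^2 = V_n$ has $y$ odd, so writing $y = 2z+1$ and dividing $(y-1)(y+1)$ by $4$ converts the two cases into the consecutive-integer factorization problems
$$z(z+1) = 2 t (2t+1)(4t+3) \quad \text{and} \quad z(z+1) = t (4t+1)(4t+3)(16 t^2 + 20 t + 5).$$
The factors on each right-hand side are pairwise coprime up to a short list of exceptional primes: for $V_7$, only $\gcd(2t, 4t+3) = \gcd(2t, 3)$ can exceed $1$, giving pairwise coprimality whenever $3 \nmid t$; for $V_{11}$, the only coincidences are $\gcd(t, 4t+3) \in \{1,3\}$ and $\gcd(t, 16t^2+20t+5) \in \{1,5\}$, giving pairwise coprimality whenever $\gcd(t, 15) = 1$. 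Since $\gcd(z, z+1) = 1$, in the coprime regime each factor must lie wholly inside $z$ or wholly inside $z+1$. Enumerating the four, respectively eight, essentially distinct partitions and imposing $(z+1) - z = 1$ produces polynomial equations in $t$ whose positive integer roots are then easily listed. For $V_7$ with $3 \nmid t$, only the partition $\{2t, 2t+1\}$ against $\{4t+3\}$ admits a positive solution, namely $t = 1$ (matching the known $V_7(1) = 169$); for $V_{11}$ with $\gcd(t, 15) = 1$, no partition admits any positive integer solution at all.

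The residual cases $3 \mid t$ (for both equations) and $5 \mid t$ (for $V_{11}$) are handled by substituting $t = 3s$, $t = 5s$, or $t = 15s$, re-factoring the right-hand side to expose a new set of pairwise coprime factors in $s$, and then repeating the partition enumeration on the resulting factors. The main obstacle is the combinatorial bookkeeping in the $V_{11}$ analysis, where one manages four multiplicative factors together with the ramified primes $2, 3, 5$ and the irreducible quadratic $16t^2 + 20t + 5$, whose reductions modulo small primes are less transparent than those of the linear factors. Once each of the finitely many sub-cases is resolved through the bounded-degree polynomial equations in the auxiliary variable, one collects the short list of $t$ that can give either $V_7$ or $V_{11}$ as an integral square and verifies that every such $t$ satisfies $t \leq 204$.
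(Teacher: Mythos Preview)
Your argument contains a genuine gap at the key step. From $z(z+1)=F_1F_2\cdots F_k$ with the $F_i$ pairwise coprime and $\gcd(z,z+1)=1$, it does \emph{not} follow that each $F_i$ lies wholly inside $z$ or wholly inside $z+1$. What follows is only that each \emph{prime power} dividing the product goes to one side; a composite $F_i$ can split. For a concrete illustration, $210=14\cdot 15$ with the coprime factorisation $210=6\cdot 35$: here $6=2\cdot 3$ contributes $2$ to $z=14$ and $3$ to $z+1=15$, and similarly $35=5\cdot 7$ splits. In your $V_7$ case the factors $2t$, $2t+1$, $4t+3$ are almost never prime powers, so writing $2t=a_1b_1$, $2t+1=a_2b_2$, $4t+3=a_3b_3$ with $z=a_1a_2a_3$ and $z+1=b_1b_2b_3$ leaves a genuinely two-parameter family of possibilities for each $t$, not the finite list of four partitions you enumerate. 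The same objection applies, with four factors, to the $V_{11}$ case. Consequently the ``polynomial equations in $t$'' you derive capture only the extremal splittings and do not exhaust the problem; the elementary case analysis cannot be completed as outlined.

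For comparison, the paper does not attempt an elementary argument here. For $V_7$ it treats $z^2=64t^3+80t^2+24t+1$ as an elliptic curve and invokes integral-point routines (SIMATH's \texttt{faintp} and MAGMA's \texttt{IntegralPoints}) to find that only $t\in\{0,1\}$ occur. For $V_{11}$ it sets $x=4t$, reducing to rational points on the genus-two curve $z^2=x^5+9x^4+28x^3+35x^2+15x+1$, and then appeals to the method of Bremner and Tzanakis for $M^2=U_{11}$. If you want an elementary route, you would need either to control the mixed splittings $F_i=a_ib_i$ (which amounts to a Runge-type or simultaneous-approximation argument and is substantially harder than what you wrote), or to replace the factorisation idea altogether.
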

\begin{proof}
The equation $z^2 = V_{7}=64t^3+80t^2+24t+1$ was treated independently using the function faintp on SIMATH and IntegralPoints on MAGMA, and found to have only the solutions corresponding to $t=0$ and $t=1$ .
For the case $z^2=V_{11}$, we first put $x=4t$, and see that the desired result will follow by determining the set of rational points
on the curve $z^2=x^5+9x^4+28x^3+35x^2+15x+1$. The proof now follows exactly as the proof for the case $M^2=U_{11}$ on pages $8-10$ of \cite{BT}, but with $x=P^2$ and $Q=-1$, as the proof therein does not take into account the fact that $P^2$ is a square..
\end{proof}

\section{Reduction To A Family Of Thue Equations} \label{sec3}

We will begin by applying an argument of Togbe, Voutier and Walsh \cite{wal} to reduce (\ref{1.4}) to a family of Thue equations. We subsequently apply the method of Thue-Siegel to find an upper bound for the number of solutions to this family. Let 
$$
P(x , y) = x^{4} + 4tx^{3}y -6tx^{2}y^{2} - 4t^{2}xy^{3} + t^{2}y^{4} .
$$
The following is a modified version of Proposition $2.1$ of \cite{wal}. We will include a proof primarily for completeness (and since we will have need of one of the inequalities derived therein).
\begin{prop}\label{tvw}
Let $t$ be a positive integer such that $t \neq m^2+m$ for all $m \in \mathbb{Z}$. If $(X , Y) \neq (1,1)$ is a positive integer solution to equation (\ref{1.4}), then there is a solution in coprime positive integers $(x , y)$  to the equation 
\begin{equation*}
 P(x , y) = t_{1}^{2} ,
\end{equation*} 
where $t_{1}$ divides $t$, $t_{1} \leq \sqrt{t}$ and $xy > 64t^3$.
\end{prop}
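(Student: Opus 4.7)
My plan is to adapt the argument of Togb\'{e}--Voutier--Walsh \cite{wal}. From a non-trivial solution $(X,Y)$ of $(t+1)X^{4}-tY^{2}=1$, I would extract a coprime pair $(x,y)$ and a divisor $t_{1}\mid t$ satisfying $P(x,y)=t_{1}^{2}$, via a 2-descent on the factorisation of $X^{4}-1$.

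First, I would use the Pell parameterisation of Section \ref{sec2}: the solution corresponds to $\tau^{2k+1}=X^{2}\sqrt{t+1}+Y\sqrt{t}$ for some $k\ge 0$. The case $k=0$ gives $(1,1)$; $k=1$ gives $V_{3}=4t+1=X^{2}$, which by Corollary \ref{me} forces $t=m^{2}+m$ and is excluded; and a short check of the Pell equation $(16t+6)^{2}-(4X)^{2}=20$ shows that $V_{5}=16t^{2}+12t+1$ is never a perfect square for $t\ge 1$; hence $k\ge 3$. Writing $\tau^{2k}=U+V\sqrt{t(t+1)}$ with $U^{2}-t(t+1)V^{2}=1$, expansion yields $X^{2}=U+tV$ and $Y=U+(t+1)V$, whence the key factorisation
\[
X^{4}-1=t(Y-X^{2})(Y+X^{2}).
\]

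Second, I would carry out the descent. Since $\gcd(X,Y)=1$ and (when $X$ is odd) $\gcd(X^{2}-1,X^{2}+1)=2$, the factors $X^{2}\mp 1$ are essentially coprime, and each prime power of $t$ must belong to exactly one of them. This yields a splitting $t=t_{1}t_{2}$ with $t_{1}\mid X^{2}-1$, $t_{2}\mid X^{2}+1$, and $\gcd(t_{1},t_{2})=1$; after swapping $t_{1}$ and $t_{2}$ if necessary, we may assume $t_{1}\le\sqrt{t}$. The cofactors $(X^{2}-1)/t_{1}$ and $(X^{2}+1)/t_{2}$, paired against the analogous decomposition of $(Y-X^{2})(Y+X^{2})$, determine a coprime pair of positive integers $(x,y)$.

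Third, I would verify $P(x,y)=t_{1}^{2}$ using the identity
\[
P(x,y)=\bigl(x^{2}+2txy-ty^{2}\bigr)^{2}-4t(t+1)(xy)^{2},
\]
which one can check by direct expansion. The claim $P(x,y)=t_{1}^{2}$ is equivalent to
\[
\bigl(x^{2}+2txy-ty^{2}-t_{1}\bigr)\bigl(x^{2}+2txy-ty^{2}+t_{1}\bigr)=4t(t+1)(xy)^{2},
\]
and the descent is arranged so that the two factors on the left coincide with a matched factorisation of the right-hand side. The remaining verification then reduces, after substituting the descent expressions for $x$, $y$, $t_{1}$ in terms of $X$, $Y$ and $t$, to the defining equation $(t+1)X^{4}-tY^{2}=1$.

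Finally, the size estimate $xy>64t^{3}$ follows from $k\ge 3$: then $X^{2}=V_{2k+1}\ge V_{7}=64t^{3}+80t^{2}+24t+1$, and $Y\ge X^{2}\sqrt{(t+1)/t}\ge X^{2}$. Since the pair $(x,y)$ has magnitudes essentially $(X^{2}\pm 1)/t_{i}$ up to small cofactors, $xy$ is at least of order $X^{4}/t\ge 64t^{3}$.

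The main obstacle, as I see it, is the bookkeeping of Step 2: distributing the prime powers of $t$ and of $(Y\mp X^{2})$ between the two sides of $(X^{2}-1)(X^{2}+1)=t(Y-X^{2})(Y+X^{2})$ consistently, handling the loss of a factor of $2$ when $X$ is odd, and checking that the resulting $(x,y)$ is genuinely coprime and positive. Once this bookkeeping is in place, the identity of Step 3 is a polynomial identity, and the size bound of Step 4 is a one-line estimate from $V_{7}$.
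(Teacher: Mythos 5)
Your overall architecture is right --- Pell parameterisation, a two\hbox{-}descent producing a splitting $t=t_1t_2$ together with two squares, and then a polynomial identity turning the Pell relation into $P(x,y)=t_1^2$ --- and the identity you quote in Step 3, $P(x,y)=(x^2+2txy-ty^2)^2-4t(t+1)(xy)^2$, is correct and is indeed the mechanism the paper uses. But the descent in your Step 2, which is the heart of the proof, does not work as described. Your factorisation $(X^2-1)(X^2+1)=t(Y-X^2)(Y+X^2)$ has no perfect square on either side, so distributing the prime powers of $t$ between $X^2-1$ and $X^2+1$ gives you a splitting $t=t_1t_2$ but no squares $G^2,H^2$, and hence no candidate pair $(x,y)=(t_1G,H)$ at all; ``the cofactors determine a coprime pair'' is precisely the step that cannot be filled in from this identity. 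The paper instead writes $k=2n+1$ and uses the half\hbox{-}index identity $V_{4n+3}=V_{2n+2}^2+V_{2n+1}^2=tU_{n+1}^2+V_{2n+1}^2$, so that with $X^2=V_{4n+3}$ one gets $(X-V_{2n+1})(X+V_{2n+1})=tU_{n+1}^2$: here the right side is $t$ times a square and the two factors on the left are coprime up to a factor $2$ (via $U_{n+1}=2T_n+(2t+1)U_n$ and $\gcd(T_n,U_n)=1$), which forces $X\mp V_{2n+1}=2t_iG^2,2t_iH^2$ with $U_{n+1}=2GH$. Solving back for $T_n,U_n$ in terms of $G,H,t_1,t_2$ and substituting into $T_n^2-t(t+1)U_n^2=1$ is what produces $P(t_1G,H)=t_1^2$. (The case $k$ even is disposed of separately, since $V_{4n+1}$ is never a square; your ad hoc exclusion of $V_5$ does not cover $k=4,6,\dots$.)

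Your Step 4 is also not salvageable as stated, because it rests on a guess about the sizes of $x$ and $y$ that does not match the construction. In the actual descent $xy=t_1GH=\tfrac{t_1}{2}U_{n+1}$, which grows like $\sqrt{V_{4n+3}}/\sqrt{t}$ times $t_1$, not like $X^4/t$; for $n=1$ or $n=2$ and $t_1$ small this is far below $64t^3$. This is exactly why the paper needs Lemma \ref{711}, showing that $V_7$ and $V_{11}$ are not squares for $t>204$: combined with Corollary \ref{me} this forces $n\geq 3$, whence $xy=\tfrac{t_1}{2}U_{n+1}\geq\tfrac{t_1}{2}U_4>64t^3$. Excluding only $V_3$ and $V_5$, as you do, leaves the cases $X^2=V_7$ and $X^2=V_{11}$ open, and for those the resulting $(x,y)$ would violate the claimed inequality $xy>64t^3$.
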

\begin{proof}
For $k \geq 0$, let us define $\tau$, $V_{2k+1}$ and $W_{2k+1}$ as in Section \ref{sec2}, and choose $T_k$ and $U_k$ to satisfy 
$$
\tau^{2k} = T_{k} + U_{k}\sqrt{t(t+1)}.
$$
Assume that $V_{2k+1} = z^{2}$ for some integer $z>1$. We will suppose that $k$ is odd, $k= 2n+1$ say, as the case that $k$ is even is similar and discussed in \cite{wal}.  When $k=2n+1$,
$$
V_{4n+3} = z^{2} = V_{2n+2}^{2}+V_{2n+1}^{2} = tU_{n+1}^{2} + V_{2n+1}^{2},
$$
with, via Corollary \ref{me}, $n>0$. Thus 
$$
tU_{n}^{2} = z^{2} - V_{2n+1}^{2}=tU_{n}^{2}=z^{2} - (T_{n}+tU_{n})^{2}.
$$
Since $U_{n+1} = 2T_{n}+(2t+1)U_{n}$ and $\gcd(U_{n} , T_{n}) = 1$, we have
$$
\gcd(U_{n+1},T_{n}+tU_{n}) = 1
$$
and hence there exist positive integers $G$, $H$, $t_{1}$, $t_{2}$, with $U_{n+1}=2GH$ and $t=t_{1}t_{2}$, such that
$$
z-(T_{n}+tU_{n}) = 2t_{1}G^{2} \; \mbox{ and } \; z+(T_{n}+tU_{n}) = 2t_{2}H^{2}.
$$
Therefore, $T_{n}+tU_{n} = t_{2}H^{2} - t_{1}G^{2}$, and since 
$$
2GH = U_{n+1} = 2T_{n}+(2t+1)U_{n},
$$
we deduce that 
$$
U_{n} = 2GH - 2t_{2}H^{2} + 2t_{1}G^{2}
$$
and
$$
T_{n}=t_{2}H^{2} -t_{1}G^{2} -t(2GH - 2t_{2}H^{2} +2t_{1}G^{2}).
$$
Substituting for $T_{n}$ and $U_{n}$ in the equation $T_{n}^{2} - t(t+1)U_{n}^{2} = 1$,  we obtain the equation
$$
t_{1}^{2}G^{4} - 4tt_{1}G^{3}H -6tG^{2}H^{2} + 4tt_{2}GH^{3}+ t_2^{2}H^{4} = 1.
$$
Multiplying both sides by $t_{1}^{2}$ and taking $x = -t_{1}G, \, y =H$, we find that $x$ and $y$ are coprime positive integers satisfying $P(x, y) = t_{1}^{2}$. To complete the proof, we observe that, since Lemma \ref{711} and Corollary \ref{me} imply that $n \geq 3$,
\begin{equation}\label{g}
x y = t_{1}GH = \frac{t_{1}}{2} \, U_{n+1} \geq \frac{t_{1}}{2} \, U_{4}  > 64 t^3 .
\end{equation}
\end{proof}

Our focus for the remainder of the paper will be to find, for fixed $t$, an upper bound upon the number of coprime positive integral solutions to the constrained inequality 
\begin{equation}\label{2.1}
0< P(x , y) \leq t^2, \; \; \; xy > 64t^3.
\end{equation}
We should note that for $t \leq 204$, Theorem \ref{main} with $(a,b)=(t+1,t)$ has been verified in \cite{wal}. Here and henceforth, therefore, we will assume that $t > 204$.
To proceed, let $\xi=\xi (x,y)$ and $\eta=\eta (x,y)$ be linear functions of $(x , y)$ so that
$$
\xi^{4} = 4 \, (\sqrt{-t}+1)(x - \sqrt{-t}y)^{4} \    \; \mbox{ and } \;   \    \eta^{4} = 4 \, (\sqrt{-t}-1 )(x + \sqrt{-t}y)^{4}.
$$
We call $(\xi , \eta)$, a pair of \emph{resolvent forms}.
Note that 
$$
P(x , y) = \frac{1}{8}(\xi^{4} - \eta^{4}) 
$$
and if $(\xi , \eta)$ is a pair of resolvent forms then there are precisely three others with distinct ratios, say 
 $(-\xi , \eta)$, $(i \xi , \eta)$ and $(-i\xi , \eta)$. 
Let $\omega$ be a fourth root of unity, $(\xi , \eta)$ a fixed pair of resolvent forms and set
$$
z = 1 - \left(\frac{\eta (x,y)}{\xi (x,y)} \right)^{4}.
$$
We say that the integer pair $(x , y)$ is \emph{related} to $\omega$ if 
$$
 \left|\omega- \frac{\eta (x,y)}{\xi (x,y)}  \right| < \frac{\pi}{12} |z|.
$$
It turns out that each nontrivial solution $(x,y)$ to (\ref{2.1}) is related to a fourth root of unity :

\begin{lemma} \label{lipp}
Suppose that $(x , y)$ is a positive integral solution to inequality (\ref{2.1}), with 
$$
\left|\omega_{j} - \frac{\eta(x , y)}{\xi(x , y)} \right| = \min_{0 \leq k \leq 3} \left|e^{k\pi i/2} - \frac{\eta(x , y)}{\xi(x , y)} \right|.
$$
Then
\begin{equation}\label{Gap2}
|\omega_{j} - \frac{\eta(x,y)}{\xi(x,y)}| < \frac{\pi}{12} |z(x , y)|.
\end{equation}
\end{lemma}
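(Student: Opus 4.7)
My plan is to exploit the identity $(\eta/\xi)^4 = 1 - z$ and to show that the constraints in (\ref{2.1}) force $|z|$ to be so small that $\eta/\xi$ is pinned inside a tiny neighborhood of a unique fourth root of unity.

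I begin by combining $P(x,y) = (\xi^4 - \eta^4)/8$ with the definition of $z$ to write $z = 8\,P(x,y)/\xi^4$. I then bound $|\xi|^4$ from below. Using $|\sqrt{-t}+1| = \sqrt{t+1}$ together with $|x-\sqrt{-t}\,y|^2 = x^2+ty^2 \geq 2\sqrt{t}\,xy$ (AM--GM), I obtain
$$|\xi|^4 \;=\; 4\sqrt{t+1}\,(x^2+ty^2)^2 \;\geq\; 16\sqrt{t+1}\,t\,(xy)^2.$$
Combined with $|P(x,y)| \leq t^2$ and $xy > 64t^3$ from (\ref{2.1}), this gives
$$|z| \;\leq\; \frac{t}{2\sqrt{t+1}\,(xy)^2} \;<\; \frac{1}{8192\,\sqrt{t+1}\,t^{5}},$$
which, for $t>204$, is vanishingly small.

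Next, since $(\eta/\xi)^4 = 1 - z$, there is a fourth root of unity $\omega$ such that $\eta/\xi = \omega\,(1-z)^{1/4}$ (principal branch), so $|\eta/\xi - \omega| = |(1-z)^{1/4} - 1|$. The binomial series
$$(1-z)^{1/4} - 1 \;=\; -\frac{z}{4} - \frac{3}{32}\,z^2 - \frac{7}{128}\,z^3 - \cdots$$
together with termwise absolute-value estimates yields
$$\bigl|(1-z)^{1/4}-1\bigr| \;\leq\; \frac{|z|}{4} + \frac{3\,|z|^2/32}{1-|z|} \;=\; \frac{|z|}{4}\bigl(1+O(|z|)\bigr).$$
Because $|z|$ is microscopic, the correction term is negligible, and the key numerical inequality $1/4 < \pi/12$ (equivalently $12 < 4\pi$) then delivers $|\eta/\xi - \omega| < (\pi/12)|z|$ with ample slack.

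Finally, since $|\eta/\xi - \omega|$ is far smaller than the minimum separation $\sqrt{2}$ between distinct fourth roots of unity, the root $\omega$ is automatically the closest one to $\eta/\xi$, and therefore coincides with the $\omega_j$ of the statement, establishing (\ref{Gap2}). The only step requiring genuine care is the lower bound on $|\xi|^4$ (which is exactly where the hypothesis $xy > 64t^3$ is needed); the numerical margin $\pi/12 - 1/4$ is tight but comfortable once $|z|$ is known to be tiny, so the binomial estimation that follows is essentially routine.
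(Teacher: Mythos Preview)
Your proof is correct, but it takes a different route from the paper's. The paper exploits the special fact that $|\eta/\xi|=1$ (since $\eta^4$ and $\xi^4$ are complex conjugates up to sign), so that $\eta/\xi$ lies on the unit circle and everything reduces to a trigonometric chord-versus-arc estimate: writing $4\theta=\arg\bigl((\eta/\xi)^4\bigr)$ one has $|z|=\sqrt{2-2\cos(4\theta)}$ and $|\omega_j-\eta/\xi|\le|\theta|$, and the elementary inequality $|u|/\sqrt{2-2\cos u}<\pi/3$ for $|u|<\pi/3$ finishes the job. Crucially, this argument only needs $|z|<1$, for which the paper uses merely $x,y\ge 1$ (giving $|\xi|^4\ge 4(t+1)^{5/2}$), not the much stronger hypothesis $xy>64t^3$.

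Your approach instead ignores the unit-circle structure and treats $(1-z)^{1/4}$ via its binomial series, obtaining $|\omega-\eta/\xi|\le |z|/4+O(|z|^2)$ and then invoking $1/4<\pi/12$. This is perfectly valid and arguably more elementary, but it does require $|z|$ to be genuinely small (not just $<1$) to absorb the $O(|z|^2)$ term, which is why you end up spending the hypothesis $xy>64t^3$ here. So your remark that ``this is exactly where the hypothesis $xy>64t^3$ is needed'' is true for \emph{your} argument but not for the lemma itself: the paper proves it under the weaker assumption, saving the strong lower bound on $xy$ for later use (e.g.\ inequality~(\ref{raccoon})).
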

\begin{proof}
We begin by noting that 
$$
|z| = \left|\frac{\xi^{4} - \eta^{4}}{\xi^{4}} \right| = \frac{8 \, P(x , y)}{|\xi^{4}|},
$$
and, from $xy \neq 0$, 
$$
|\xi^{4}(x , y)| \geq 4 (\sqrt{1+t})^{5},
$$
whereby
$$
|z|  \leq \frac{2 t^2}{(\sqrt{t+1})^{5}} < 1.
$$
Since $\eta = - \bar{\xi}$, it follows that 
$$
\left|\frac{\eta}{\xi} \right| = 1, \; \;  |1 - z| = 1.
$$
Now let
$4  \theta = \textrm{arg} \left( \frac{\eta(x , y)^{4}}{\xi(x , y)^{4}}\right)$.
We have
 $$
 \sqrt{2 - 2\cos(4\theta)} = |z|  < 1,
 $$
 and so $|\theta| < \frac{\pi}{12}.$ Since 
$$
\left |\omega_{j} - \frac{\eta(x,y)}{\xi(x,y)} \right| \leq  |\theta|, 
$$
it follows that
$$
\left|\omega_{j} - \frac{\eta(x,y)}{\xi(x,y)} \right| \leq \frac{1}{4} \frac{|4\theta|}{\sqrt{2 - 2\cos(4\theta)}} 
\left|1 - \frac{\eta(x,y)^{4}}{\xi(x,y)^{4}} \right|. 
$$
From the fact that $\frac{|4\theta|}{\sqrt{2 - 2\cos(4\theta)}} < \frac{\pi}{3}$ whenever $ 0<|\theta| < \frac{\pi}{12},$ we obtain inequality (\ref{Gap2}), as desired.
\end{proof}
 
 This lemma shows that each integer pair $(x , y)$ is related to precisely one fourth root of unity. Let us fix such a fourth root, say $\omega$, and suppose that we have distinct coprime positive solutions $(x_1,y_1)$ and $(x_2,y_2)$ to inequality (\ref{2.1}), each related to $\omega$. We will assume, as we may, that  $|\xi(x_{2} , y_{2})| \geq |\xi(x_{1} , y_{1})|$. For concision, we will write $\eta_{i} = \eta(x_{i} , y_{i})$ and $\xi_{i} = \xi(x_{i} , y_{i})$. Before we move into the heart of our proof, we will mention a pair of results that will be the starting point for our later proving that $(x_1,y_1)$ and $(x_2,y_2)$ are far apart in height.
 
Since 
\begin{equation} \label{frog}
|z| = \frac{8 P(x,y)}{|\xi|^{4}} \leq \frac{8 t^2}{|\xi|^{4}},
\end{equation}
it follows from (\ref{Gap2}) that 
 \begin{equation} \label{fish}
 |\xi_{1} \eta_{2} - \xi_{2} \eta_{1}| = |\xi_{1}(  \eta_{2} - \omega \xi_{2}) - \xi_{2}( \eta_{1} - \omega \xi_{1})| \leq \frac{2\pi}{3}t^2 \, \left(\frac{|\xi_{1}|}{|\xi_{2}^{3}|} + \frac{|\xi_{2}|}{|\xi_{1}^{3}|} \right) \leq  \frac{ 4\pi t^2 \, |\xi_{2}|}{3|\xi_{1}^{3}|}.
\end{equation}
On the other hand, choosing our fourth root appropriately, we have
$$ 
\left( \begin{array}{cc}
   \sqrt{2} (\sqrt{-t}+1)^{1/4} & - \sqrt{2} (\sqrt{-t}+1)^{1/4} \sqrt{-t}\\
   \sqrt{2} (\sqrt{-t}-1)^{1/4} &  \sqrt{2} (\sqrt{-t}-1)^{1/4}\sqrt{-t}
    \end{array} \right) 
 \left( \begin{array}{cc}
 x_{1} & x_{2}\\
 y_{1} & y_{2}
  \end{array} \right)  = 
    \left( \begin{array}{cc}
\xi_{1} & \xi_{2}\\
 \eta_{1} & \eta_{2}
  \end{array} \right)
$$
and so
$$ 
|\xi_{1} \eta_{2} - \xi_{2} \eta_{1} | = \left| 4 (t+1)^{1/4}\sqrt{t} \, (x_{1}y _{2}- x_{2}y_{1}) \right|.
$$
Since   $x_{1}y _{2}- x_{2}y_{1}$ is a nonzero integer (recall that we assumed $\gcd (x_i,y_i)=1$), we have
 \begin{equation}\label{lb}
 |\xi_{1} \eta_{2} - \xi_{2} \eta_{1} |  \geq 4 \sqrt{t} \, (t+1)^{1/4}
 \end{equation}
and thus, combining (\ref{fish}) and (\ref{lb}), we conclude that
if $(x_{1} , y_{1})$ and $(x_{2} , y_{2})$ are distinct solutions to (\ref{2.1}), related to $\omega$, with
$|\xi(x_{2} , y_{2})| \geq |\xi(x_{1} , y_{1})|$
then 
\begin{equation}\label{Gap}
|\xi_{2}| > \frac{3}{\pi}t^{-5/4} \, |\xi_{1}|^{3}.
\end{equation}

As a final preliminary result, we have the following lemma, whose proof is an immediate consequence of the definition of resolvent forms :

\begin{lemma}\label{ai}
If $(x_{1} , y_{1})$ and $(x_{2}, y_{2})$ are two pairs of rational integers then
$$
\frac{\xi(x_{1} , y_{1}) \eta(x_{2} , y_{2})}{(-t-1)^{1/4}}, \; \;  \xi(x_{1} , y_{1})^{3} \xi(x_{2} , y_{2}) \;  \mbox{ and } \; \eta(x_{1} , y_{1})^{3} \eta(x_{2} , y_{2}) 
$$
are integers in $\mathbb{Q}(\sqrt{-t})$.
\end{lemma}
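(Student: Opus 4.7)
The plan is to prove this lemma by direct computation, using the explicit expressions for $\xi$ and $\eta$ obtained from fixing fourth roots consistently (as already done in the matrix displayed before inequality (\ref{lb})).

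First, I would write
$$
\xi(x,y) = \sqrt{2}\,(\sqrt{-t}+1)^{1/4}\,(x - \sqrt{-t}\,y), \qquad \eta(x,y) = \sqrt{2}\,(\sqrt{-t}-1)^{1/4}\,(x + \sqrt{-t}\,y),
$$
which is permitted (by possibly replacing $(\xi,\eta)$ with one of its three variant pairs) and is in fact the choice made in the excerpt. Raising to the fourth power recovers the defining equations for $\xi^{4}$ and $\eta^{4}$.

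Next, I would compute each of the three quantities directly. For the first, I would multiply:
$$
\xi(x_{1},y_{1})\,\eta(x_{2},y_{2}) = 2\,(\sqrt{-t}+1)^{1/4}(\sqrt{-t}-1)^{1/4}(x_{1} - \sqrt{-t}\,y_{1})(x_{2} + \sqrt{-t}\,y_{2}).
$$
Since $(\sqrt{-t}+1)(\sqrt{-t}-1) = -t-1$, the product of the two fourth roots is $(-t-1)^{1/4}$ (up to a fourth root of unity, which can be absorbed into our initial choice), so dividing through gives
$$
\frac{\xi(x_{1},y_{1})\,\eta(x_{2},y_{2})}{(-t-1)^{1/4}} = 2\,(x_{1} - \sqrt{-t}\,y_{1})(x_{2} + \sqrt{-t}\,y_{2}),
$$
visibly an element of $\mathbb{Z}[\sqrt{-t}] \subset \mathbb{Q}(\sqrt{-t})$. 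For the second and third, I would combine $(\sqrt{-t}\pm 1)^{3/4}\cdot (\sqrt{-t}\pm 1)^{1/4} = \sqrt{-t}\pm 1$ to obtain
$$
\xi(x_{1},y_{1})^{3}\,\xi(x_{2},y_{2}) = 4\,(\sqrt{-t}+1)\,(x_{1} - \sqrt{-t}\,y_{1})^{3}(x_{2} - \sqrt{-t}\,y_{2}),
$$
$$
\eta(x_{1},y_{1})^{3}\,\eta(x_{2},y_{2}) = 4\,(\sqrt{-t}-1)\,(x_{1} + \sqrt{-t}\,y_{1})^{3}(x_{2} + \sqrt{-t}\,y_{2}),
$$
both clearly integers in $\mathbb{Q}(\sqrt{-t})$ since the $x_{i},y_{i}$ are rational integers.

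There is no real obstacle here; the only subtle point is bookkeeping for the fourth roots of unity, ensuring that the fixed pair $(\xi,\eta)$ chosen at the outset is compatible for all three statements simultaneously. Since replacing $(\xi,\eta)$ by $(\omega\xi, \omega'\eta)$ for fourth roots of unity $\omega,\omega'$ multiplies each of the three expressions by a fourth root of unity and all four variant pairs give the same set of products up to such units, the statement is independent of the choice of pair and the proof reduces to the computations above.
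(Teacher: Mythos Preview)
Your proposal is correct and is exactly the argument the paper has in mind: the paper's proof is the single sentence ``an immediate consequence of the definition of resolvent forms,'' and your direct computation using the explicit choice $\xi(x,y)=\sqrt{2}\,(\sqrt{-t}+1)^{1/4}(x-\sqrt{-t}\,y)$, $\eta(x,y)=\sqrt{2}\,(\sqrt{-t}-1)^{1/4}(x+\sqrt{-t}\,y)$ spells this out. Your remark on the fourth-root ambiguity is the right way to handle the only subtlety; note in particular that $\xi_{1}^{3}\xi_{2}$ and $\eta_{1}^{3}\eta_{2}$ are actually invariant under $\xi\mapsto\omega\xi$, $\eta\mapsto\omega'\eta$, so only the first expression needs the compatibility of $(-t-1)^{1/4}$ with the product $(\sqrt{-t}+1)^{1/4}(\sqrt{-t}-1)^{1/4}$.
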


\section{ Pad\'e Approximation}

The main focus of this section is to construct  a family of dense approximations to $\xi/\eta$ from rational function approximations to the binomial function $(1 - z)^{1/4}$. Consider the system of linear forms 
$$
R_{r}(z) = -Q_{r}(z) + (1-z)^{1/4}P_{r}(z),
$$ 
where $R_{r}(z) = z^{2r+1}\bar{R}_{r}(z)$, $\bar{R}_{r}(z)$ is regular at $z=0$, and $P_{r}(z)$ and $Q_{r}(z)$ are polynomials of degree $r$. Thue \cite{thu34}, \cite{thu35} explicitly found polynomials $P_{r}(z)$ and $Q_{r}(z)$  that satisfy such a relationship, and  Siegel \cite{Sie28} identified them in terms of hypergeometric  polynomials. Refining the work of Thue and Siegel,  Evertse \cite{Ev} used the theory of hypergeometric functions to sharpen Siegel's upper bound for the number of solutions to the equation $f(x , y) = 1$, where $f$ is a cubic binary form with positive discriminant. In this paper, we will apply similar arguments to certain quartic forms.

We begin with some preliminaries on hypergeometric functions. A {\em hypergeometric function} is a power series of the shape
$$ 
F(\alpha , \beta , \gamma , z) =   1 + \sum_{n =1}^{\infty} \frac{\alpha (\alpha + 1) \cdots (\alpha + n  -1) \beta (\beta + 1) \cdots (\beta + n - 1)}{\gamma (\gamma + 1) \cdots (\gamma + n - 1) n!} z^{n}.
$$
Here $z$ is a complex variable and $\alpha$, $\beta$  and $\gamma$ are complex constants. If $\alpha$ or $\beta$ is a non-positive integer  and $m$ is the smallest integer such that 
$$
\alpha    (\alpha + 1) \cdots (\alpha + m) \beta (\beta + 1) \cdots (\beta + m) = 0,
$$
then $F(\alpha , \beta , \gamma , z) $ is a polynomial in $z$ of degree $m$. Furthermore, if $\gamma$  is a non-positive integer, we will assume that at least one of $\alpha$ and $\beta$ is also a non-positive integer, smaller than $\gamma$.

We note that $F( \alpha , \beta , \gamma , z)$ converges for $|z| < 1$.  By a result of Gauss, if $\alpha$, $\beta$  and $\gamma$ are real with $\gamma > \alpha + \beta $ and $\gamma$, $ \gamma - \alpha$ and $\gamma - \beta$ are not non-positive integers, then $F(\alpha , \beta , \gamma , z)$ converges for $z = 1$ and we have
\begin{equation}\label{Gam}
F(\alpha , \beta , \gamma , 1) = \frac{\Gamma(\gamma) \Gamma(\gamma - \alpha - \beta)}{\Gamma(\gamma - \alpha) \Gamma(\gamma - \beta)}.
\end{equation} 
For future use, it is worth noting that the hypergeometric function $F( \alpha , \beta , \gamma , z)$ satisfies the differential equation 
 \begin{equation} \label{hip}
 z (1 - z) \frac{d^{2}F}{dz^{2}} + (\gamma - (1 + \alpha + \beta ) z )\frac{dF}{dz} - \alpha \beta F = 0.
 \end{equation}
 
 Our family of dense approximations to $\xi/\eta$ are as given in the following lemma; their connection to hypergeometric functions will be made apparent later.
\begin{lemma}\label{hyp}
Let $r$ be a positive integer and $g \in \{ 0 , 1 \}$. Put
\begin{eqnarray}\label{AB}\nonumber
A_{r, g} (z) & = &  \sum_{m =0}^{r}{r - g + \frac{1}{4} \choose m} {2r - g - m \choose r - g}   (-z)^{m},   \\  
B_{r, g} (z) & = & \sum_{m =0}^{r-g}{r - \frac{1}{4} \choose m} {2r - g - m \choose r }   (-z)^{m}.
 \end{eqnarray}
  \flushleft
 \begin{itemize}
 \item[(i)] There exists a power series $F_{r,g}(z)$ such that for all complex numbers $z$ with $|z| < 1$
 \begin{equation}\label{ABF}
 A_{r,g}(z) - (1 - z)^{1/4} B_{r, g}(z) = z^{2r+1 -g}F_{r,g}(z)
 \end{equation}
  and 
  \begin{equation}\label{F}
 |F_{r,g}(z)| \leq \frac{{r-g+1/4 \choose r+1-g} {r- 1/4 \choose r}}{{2r + 1 - g \choose r}} (1 - |z|)^{-\frac{1}{2}(2r + 1 - g)}.
 \end{equation}
 \item[(ii)] For all complex numbers $z$ with $|1 - z| \leq 1$ we have 
 \begin{equation}\label{A}
 |A_{r,g}(z)| \leq {2r - g \choose r}.
 \end{equation}
 
 \item[(iii)] For all complex numbers $z \neq 0$ and for $h \in \{1 , 0\}$ we have
 \begin{equation}\label{BA}
 A_{r, 0}(z) B_{r+h , 1 , 1}(z) \neq A_{r+h , 1}(z) B_{r , 0}(z).
 \end{equation} 
 \end{itemize}
\end{lemma}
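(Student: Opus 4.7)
The plan is to exploit the fact that $A_{r,g}$ and $B_{r,g}$ are the classical Pad\'e numerator and denominator of $(1-z)^{1/4}$, built from hypergeometric polynomials. Matching coefficients against Pochhammer symbols, I would first verify the identifications
$$
A_{r,g}(z) = \binom{2r-g}{r}\, F\bigl(-r,\, g-r-\tfrac{1}{4},\, g-2r,\, z\bigr), \qquad B_{r,g}(z) = \binom{2r-g}{r}\, F\bigl(g-r,\, \tfrac{1}{4}-r,\, g-2r,\, z\bigr).
$$
With this in hand, (\ref{ABF}) reduces to the classical statement that $A_{r,g}(z) - (1-z)^{1/4} B_{r,g}(z)$ vanishes to order $2r+1-g$ at $z=0$ and takes the explicit form $c_{r,g}\, z^{2r+1-g}\, F(r+1-g+\tfrac{1}{4},\, r+1,\, 2r+2-g,\, z)$ for an explicit constant $c_{r,g}$. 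Because Euler's transformation fails when the lower parameter is a non-positive integer, I would verify this either by checking that both sides satisfy the same instance of (\ref{hip}) with matching initial data at $z=0$, or by a direct coefficient comparison.

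For the bound (\ref{F}), the hypergeometric series defining $F_{r,g}(z)$ has positive coefficients, so the crude majorant $|F_{r,g}(z)| \leq F_{r,g}(|z|)$ applies. The leading constant simplifies through Pochhammer symbol ratios to the binomial expression appearing in (\ref{F}), and the factor $(1-|z|)^{-(2r+1-g)/2}$ arises from a Cauchy--Schwarz-type dominance estimate on the truncated series, comparing it to the value of the companion hypergeometric series at $z=1$, which is evaluated via Gauss's formula (\ref{Gam}). This is precisely the device used by Evertse in \cite{Ev}, adapted to our quartic setting.

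For part (ii), I would substitute $w = 1-z$ and verify that the polynomial $A_{r,g}(1-w)$ has non-negative coefficients in $w$. Expanding $(w-1)^m = \sum_k \binom{m}{k}(-1)^{m-k} w^k$ in (\ref{AB}) and reindexing, the coefficient of $w^k$ collapses, via Chu--Vandermonde-type cancellations, to a sum of manifestly non-negative terms. Granted non-negativity, for $|1-z| = |w| \leq 1$ we immediately obtain
$$
|A_{r,g}(z)| \leq \sum_k c_k |w|^k \leq \sum_k c_k = A_{r,g}(0) = \binom{2r-g}{r-g} = \binom{2r-g}{r},
$$
as required.

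For part (iii), applying (\ref{ABF}) to both $(r,0)$ and $(r+h,1)$ gives
$$
\Delta(z) := A_{r,0}(z) B_{r+h,1}(z) - A_{r+h,1}(z) B_{r,0}(z) = z^{2r+1} F_{r,0}(z) B_{r+h,1}(z) - z^{2r+2h} F_{r+h,1}(z) B_{r,0}(z),
$$
so $\Delta$, viewed as a power series, is divisible by $z^{2r+h}$. Since $\deg A_{r,g} = r$ and $\deg B_{r,g} = r-g$ force $\deg \Delta \leq 2r+h$, this forces $\Delta(z) = c_{r,h}\, z^{2r+h}$ for a scalar $c_{r,h}$ equal to a product of leading coefficients of $A_{r,g}$ and $B_{r,g}$. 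These leading coefficients are generalized binomials of the shape $\binom{r-g+1/4}{r}$ and $\binom{r-1/4}{r}$, none of which vanish, so $c_{r,h} \neq 0$ and $\Delta$ vanishes only at $z=0$. The hard part will be part (i), specifically establishing (\ref{ABF}) carefully (given the subtleties of hypergeometric polynomials with non-positive integer $\gamma$) and then extracting the sharp bound (\ref{F}) with both the correct binomial constant and the correct exponent $(2r+1-g)/2$; parts (ii) and (iii) then follow essentially by bookkeeping.
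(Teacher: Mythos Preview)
Your plan follows the paper's framework: hypergeometric identification of $A_{r,g}$ and $B_{r,g}$, an ODE/solution-space argument for (\ref{ABF}), positivity of coefficients in the variable $w=1-z$ for (ii), and a degree count for (iii). The execution differs in a few places worth noting. For (ii), the paper sidesteps any Chu--Vandermonde computation by using the differential equation (\ref{hip}) with $\gamma=3/4$ (one-dimensional solution space) to identify $A_{r,g}(1-w)$ with an explicitly written polynomial $C_{r,g}(w)=\binom{r-1/4}{r}F(-1/4-r+g,-r,3/4,w)$ whose coefficients are visibly positive; this is cleaner than unwinding the binomial sums. For the bound (\ref{F}), the mechanism is not Cauchy--Schwarz but a termwise ratio test: with $G_{r,g}(z)=F(r+1-g,\,r+3/4,\,2r+2-g,\,z)=\sum g_m z^m$ (your stated parameters $r+5/4-g,\,r+1$ are the Euler-transformed ones and carry an extra $(1-z)^{1/4}$), one checks $g_{m+1}/g_m\le (r+1/2-g/2+m)/(m+1)$, which majorizes $G_{r,g}$ coefficientwise by $(1-z)^{-(2r+1-g)/2}$; Gauss's formula (\ref{Gam}) is used only to evaluate the constant $A_{r,g}(1)/G_{r,g}(1)$. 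For (iii), your leading-coefficient argument is correct and arguably more direct; the paper instead evaluates the determinant at $z=1$ to see it is nonzero.
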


\begin{proof}
Put
$$
C_{r,g}(z) = \sum_{m =0}^{r} {r - 1/4 \choose r - m }{r - g + 1/4 \choose m} z^{m} 
$$
and 
$$
D_{r,g} = \sum_{m = 0}^{r - g} {r - 1/4 \choose m} {r - g + 1/4 \choose r - g + m} z^{m}.
$$
Note that, in terms of hypergeometric functions,
$$
A_{r, g}(z) = {2r - g \choose r} F (-1/4 - r + g , -r , -2r + g , z),
$$ 
$$
B_{r, g}(z) = {2r - g \choose r - g} F (1/4 - r  , -r + g , -2r + g , z), 
$$ 
$$
C_{r, g}(z) = {r - 1/4 \choose r} F (-1/4 - r + g , -r , 3/4, z)
$$
and 
$$
D_{r, g}(z) = {r - g + 1/4 \choose r- g} F (1/4- r  ,  -r + g , 5/4,z), 
$$ 
We will begin by proving that
$$
C_{r,g}(z) = A_{r,g}(1 - z)  , \  D_{r , g}(z) = B_{r,g}(1 - z).
$$
The power series $ F(z) = \sum_{m =0}^{\infty} a_{m} z^{m}$ is a solution to the differential equation (\ref{hip}) 
precisely when
\begin{equation}\label{an}
(n + 1) (\gamma + n) a_{n + 1}  = (\alpha + n) ( \beta + n ) a_{n}   \  \textrm{for}  \  n = 0, 1 , 2 , \ldots  .
\end{equation}
Both $A_{r,g}(1 - z) $ and $C_{r,g}(z)$ satisfy  (\ref{hip}) with $\alpha = -1/4 - r + g$ , $\beta = -r$ , $\gamma = 3/4$.  Since $\gamma$ is not a non-positive integer, all coefficient $a_{i}$ of power series $y(z)$ are determined by $a_{0}$. Hence the solution space of (\ref{hip}) is one-dimensional. Therefore, $A_{r,g}(1 - z)$ and $C_{r,g}(z)$ are linearly dependent.
On equating the coefficients of $z^{r}$ in 
$$
(1 + z) ^{2r + g} = (1 + z)^{r - 1/4} (1 + z)^{r - g + 1/4}, 
$$
we find that
\begin{displaymath}\nonumber
C_{r,g} (1) =  \sum_{m = 0}^{r} {r - 1/4 \choose r -m }{r - g + 1/4 \choose m} = {2r - g \choose r} = A_{r,g}(0),
\end{displaymath}
and hence $C_{r,g}(z) = A_{r,g}(1 - z)$. Similarly, $D_{r,g}(z) = B_{r,g}(1 - z)$.
One can easily observe that $C_{r, g}(z)$  has positive coefficients. Hence when $|1 - z| \leq 1$,
$$
|A_{r,g}(z)| = |C_{r,g}(1 - z)| \leq C_{r,g}(1) = A_{r,g}(0) = {2r - g \choose r}.
$$
This proves part (ii) of our lemma.

To prove (\ref{ABF}), we define
  $$
  G_{r,g}(z) = F(r+1-g , r+3/4 , 2r +2 -g , z)
  $$ 
 and notice that, for $|z| < 1$, the functions $A_{r,g}(z)$, $(1 - z)^{1/4} B_{r,g}(z)$ and $z^{2r+1 -g}G_{r,g}(z)$ satisfy (\ref{hip}) with $\alpha = -1/4 - r + g$ , $\beta = -r$ , $\gamma = -2r + g$.
  Suppose
  \begin{displaymath} 
  G_{r,g}(z) = \sum_{m =0}^{\infty} g_{m} z^{m}.
  \end{displaymath}
  We have $g_{0} = 1$ and, for $m \geq 0$,
 $$
  \frac{g_{m+1}}{g_{m}}  =   \frac{(r + 1- g + m)(r + 3/4 +m)}{(m + 1)(2r + 2 - g + m)} \leq  \frac{r + 1/2 - g/2 + m}{m + 1} = \frac {(-1)^{m+1} {-r -1/2 + g/2 \choose m+1 }}{(-1)^{m}{-r - 1/2 + g/2 \choose m}}.
$$
Therefore,
  $$
  |G_{r,g}(z)| \leq \sum_{m = 0}^{r} {-r -1/2 + g/2 \choose m} (-|z|)^{m} = (1 - |z|)^{-\frac{1}{2}(2r+1-g)}.
  $$
 Since $r \geq 1$ and $g \in \{0 , 1 \}$, $\gamma = -2r + g$ is a negative integer. By (\ref{an}), If $F(z) = \sum_{m= 0}^{\infty} a_{m} z^{m}$ is a solution to (\ref{hip}), then since $a_{0}$ and $a_{2r - g+1}$  may vary independently, the solution space of (\ref{hip}) is two-dimensional. Therefore, there are constants $c_{1}$, $c_{2}$ and $c_{3}$, not all zero, such that 
 $$
 c_{1}A_{r,g}(z) + c_{2} (1 - z) ^{1/4}B_{r,g}(z) + c_{3} z^{2r + 1 - g} G_{r,g}(z) = 0.
 $$
 Letting $z = 0$, since $A_{r,g}(0) = B_{r,g}(0) \neq 0 $, we find that $c_{1} = - c_{2} \neq 0$. We may thus assume $c_{1} = 1$. Substituting $z=1$ in above identity thus yields $c_{3} = -\frac{A_{r,g}(z)}{G_{r,g}(z)}$, whence
 $$
 F_{r,g}(z) = A_{r,g}(1) G_{r,g}(1)^{-1} G_{r,g}(z).
 $$ 
 In order to complete the proof of part (i), note that,  by (\ref{Gam}), we have
 $$
 A_{r,g}(1) G_{r,g}(1)^{-1}  = {r - 1/4 \choose r} \frac{\Gamma(r +1) \Gamma(r + 5/4 -g)}{\Gamma(2r + 2 -g) \Gamma(1/4)} =\frac{ {r - 1/4 \choose r} {r - g + 1/4 \choose r + 1 - g}}{{2r+1-g \choose r}} .
$$
  
    It remains to prove part (iii). By (\ref{ABF}),
   $$
   A_{r,0}(z) B_{r+h , 1}(z) - A_{r+h , 1}(z)B_{r,0}(z) = z^{2r+h}P_{r,h}(z),
   $$
   where $P_{r,h}(z)$ is a power series. However, the left hand side of the above identity is a polynomial of degree at most $2r + h$, and so $P_{r,h}$ must be a constant.  Letting $z = 1$, we obtain that $P_{r,h}$ is not $0$. Therefore, $$A_{r,0}(z) B_{r+h , 1}(z) - A_{r+h , 1}(z)B_{r,0}(z) = 0$$ if and only if $z = 0$.
 \end{proof}

\section{Some Algebraic Numbers}

Combining our polynomials of the previous section with the resolvent forms defined in Section \ref{sec3}, we will  consider the complex sequences $\Sigma_{r,g} $ given by
 $$
 \Sigma_{r,g} = \frac{\eta_{2}}{\xi_{2}}A_{r,g}(z_{1}) - (-1)^{r}\frac{\eta_{1}}{\xi_{1}} B_{r,g}(z_{1})
 $$
 where $z_{1} = 1 - \eta_{1}^{4}/ \xi_{1}^{4}$.
Define
$$
\Lambda_{r,g} =\frac{ \xi_{1}^{4r+1-g}\xi_{2}}{(-t-1)^{1/4}} \Sigma_{r,g}.
$$
We will show that  $\Lambda_{r,g}$ is either an integer in $\mathbb{Q}(\sqrt{-t})$ or a fourth root of such an integer.  If $\Lambda_{r,g} \neq 0$, this provides a lower bound upon 
 $|\Lambda_{r,g}|$. In conjunction with the inequalities derived in Lemma \ref{hyp}, this will induce a strong ``gap principle'', guaranteeing that solutions to inequality (\ref{2.1}) must, in a certain sense, increase rapidly in height.
  
For a polynomial $P(z)$ of degree $n$, we will denote by $P^{*}(x , y) = x^{n} P(y/x)$ an associated binary form.
Let $A_{r,g}$ and $B_{r,g}$ be as in (\ref{AB}) and, as in the proof of Lemma \ref{hyp}, set
 $$
 C_{r,g}(z) = A_{r,g}(1 - z)  \; \mbox{ and } \;  D_{r , g}(z) = B_{r,g}(1 - z).
 $$
 For $z \neq 0$, we have $D_{r , 0}(z) = z^{r} C_{r,0}(z^{-1})$, hence
\begin{eqnarray}\label{star} \nonumber 
A^{*}_{r , 0}\left(z , z + \bar{z} \right) 
& = & z^{r} A_{r , 0} \left(1 + \frac{\bar{z}}{z} \right) = z^{r} C_{r,0} \left(\frac{-\bar{z}}{z} \right) \\   
& = & (-1)^{r}\bar{z}^{r} D_{r,0} \left(\frac{-z}{\bar{z}} \right) = (-1)^{r}\bar{z}^{r} B_{r,0} \left(1 +\frac{z}{\bar{z}} \right) \\  \nonumber
& = & (-1)^{r}B^{*}_{r , 0} \left(\bar{z} , \bar{z} +z \right) =(-1)^{r} \bar{B}^{*}_{r,0} \left(z , z +\bar{z} \right) .
\end{eqnarray}

\begin{lemma}\label{24}
For any pair of integers $(x , y)$, both $A^{*}_{r,g}(\xi^{4}(x , y), \xi^{4}(x , y) -\eta^{4}(x , y))$  and $B^{*}_{r,g}(\xi^{4}(x , y), \xi^{4}(x , y) -\eta^{4}(x , y) )$ are algebraic integers in 
$\mathbb{Q}(\sqrt{-t})$.
\end{lemma}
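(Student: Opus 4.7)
The plan is to push the factor $8$ of $\xi^{4}-\eta^{4}=8P(x,y)$ into the Padé coefficients and then verify the resulting rationals are integers one prime at a time. First, expanding the defining relation produces $\xi^{4}=4P(x,y)+4\sqrt{-t}\,Q(x,y)$ for some $Q\in\mathbb{Z}[x,y]$, whence $\xi^{4}\in\mathbb{Z}[\sqrt{-t}]$; symmetrically $\eta^{4}\in\mathbb{Z}[\sqrt{-t}]$, so every power $(\xi^{4})^{k}$ is an algebraic integer of $\mathbb{Q}(\sqrt{-t})$. Moreover $\xi^{4}-\eta^{4}=8P(x,y)\in 8\mathbb{Z}$.

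Next, using the formula from Lemma~\ref{hyp},
\[
A^{*}_{r,g}(\xi^{4},\xi^{4}-\eta^{4})=\sum_{m=0}^{r}(-1)^{m}c_{m}(\xi^{4}-\eta^{4})^{m}(\xi^{4})^{r-m}=\sum_{m=0}^{r}(-1)^{m}\bigl(8^{m}c_{m}\bigr)P(x,y)^{m}(\xi^{4})^{r-m},
\]
where $c_{m}=\binom{r-g+1/4}{m}\binom{2r-g-m}{r-g}$. Since $P(x,y)\in\mathbb{Z}$ and $(\xi^{4})^{r-m}\in\mathbb{Z}[\sqrt{-t}]$, the statement reduces to proving $8^{m}c_{m}\in\mathbb{Z}$ for every $0\le m\le r$. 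Unpacking the binomials,
\[
8^{m}c_{m}=\frac{2^{m}\,\prod_{j=0}^{m-1}\bigl(4(r-g-j)+1\bigr)\,\binom{2r-g-m}{r-g}}{m!}.
\]

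The integrality of this expression I would verify prime by prime. At $p=2$, the numerator carries the explicit factor $2^{m}$ while $v_{2}(m!)=\sum_{k\ge 1}\lfloor m/2^{k}\rfloor<m$, so the $2$-adic inequality is trivial. At an odd prime $p$, the $m$ integers $4(r-g-j)+1$ form an arithmetic progression of common difference $-4$; since $\gcd(p,4)=1$, the residue class $0\pmod{p^{k}}$ is hit in this AP at least $\lfloor m/p^{k}\rfloor$ times. Summing over $k\ge 1$ and applying Legendre's formula,
\[
v_{p}\!\Bigl(\prod_{j=0}^{m-1}(4(r-g-j)+1)\Bigr)\;\ge\;\sum_{k\ge 1}\lfloor m/p^{k}\rfloor\;=\;v_{p}(m!),
\]
which combined with $v_{p}\binom{2r-g-m}{r-g}\ge 0$ gives $v_{p}(8^{m}c_{m})\ge 0$, as required.

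The argument for $B^{*}_{r,g}$ is essentially identical: expansion produces $c'_{m}=\binom{r-1/4}{m}\binom{2r-g-m}{r}$ and $8^{m}c'_{m}=2^{m}\prod_{j=0}^{m-1}(4(r-j)-1)\binom{2r-g-m}{r}/m!$, whose AP $\{4(r-j)-1\}$ again has common difference $-4$, so the identical Legendre-type estimate applies. The only substantive step in the whole argument is the odd-prime bound on the valuation of the AP product; everything else is bookkeeping.
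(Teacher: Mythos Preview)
Your proof is correct and follows the same reduction as the paper: both observe that $\xi^{4}\in\mathbb{Z}[\sqrt{-t}]$, that $\xi^{4}-\eta^{4}=8P(x,y)$, and that the claim therefore boils down to the integrality of $8^{m}\binom{a/4}{m}$ for nonnegative integers $a$ (here $a=4(r-g)+1$ or $a=4r-1$). The only difference is that the paper dispatches this last fact by citing Lemma~4.1 of Chudnovsky \cite{Chu}, whereas you supply a direct, self-contained $p$-adic argument (trivial at $p=2$ via $v_{2}(m!)<m$, and at odd $p$ via the standard count of multiples of $p^{k}$ in an arithmetic progression of step coprime to $p$). Your route avoids the external reference at the modest cost of a short Legendre-formula computation; otherwise the two proofs are the same.
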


\begin{proof}

It is clear that $A^{*}_{r,g}(\xi^{4}(x , y), \xi^{4}(x , y) -\eta^{4}(x , y))$  and $B^{*}_{r,g}(\xi^{4}(x , y), \xi^{4}(x , y) -\eta^{4}(x , y) )$ belong to  $\mathbb{Q}(\sqrt{-t})$; we need only show that they are algebraic integers. From the definitions of $A^{*}_{r,g}(x,y)$, $B^{*}_{r,g}(x,y)$, $\xi (x,y)$ and $\eta(x,y)$ (in particular, since $\xi^{4}(x , y) -\eta^{4}(x , y)= 8P(x , y)$), this is an immediate consequence of Lemma 4.1 of \cite{Chu}, which, in this case, implies that 
$$
\binom{a/4}{n} 8^{n}
$$
is, for fixed nonnegative integers $a$ and $n$, a rational integer.
\end{proof}

We now proceed to show that $\Lambda_{r,g}$ has the desired property. We have
$$
 \Lambda_{r,g} =\frac{\xi_{1}^{1 -g} \eta_{2}}{(-t-1)^{1/4}}A^{*}_{r,g}(\xi^{4} , \xi^{4} - \eta^{4}) - \frac{(-1)^{r}\xi_{1}^{2g}\xi_{2} \eta_{1}}{(-t-1)^{1/4}} B^{*}_{r,g}(\xi^{4} , \xi^{4} - \eta^{4}).
 $$
By Lemmas \ref{ai}, \ref{24} and (\ref{star}), $\Lambda_{r , 0} \in \mathbb{Z}  \sqrt{-t}$. Similarly,  Lemmas \ref{ai} and \ref{24} imply that $\Lambda_{r , 1}^{4}$ is an algebraic integer in $\mathbb{Q}(\sqrt{-t})$. We claim that it is not a rational integer. To see this, let us start by noting that
$$
\frac{\Sigma_{r,g}}{(-t-1)^{1/4}} = \frac{\eta_{2}}{\xi_{2}}A_{r,g}(z_{1}) -(-1)^{r} \frac{\eta_{1}}{\xi_{1}} B_{r,g}(z_{1}) = \frac{\eta}{\xi} \big{(}\frac{\eta_{2}/\eta}{\xi_{2}/\xi}A_{r,g}(z_{1}) - (-1)^{r}\frac{\eta_{1}/\eta}{\xi_{1}/\xi} B_{r,g}(z_{1})\big{)},
$$
where $\eta = (\sqrt{-t}-1)^{1/4}$ and $\xi = (\sqrt{-t}+1)^{1/4}$.
By Lemma \ref{24},   
$$
\frac{\eta_{2}/\eta}{\xi_{2}/\xi}A_{r,g}(z_{1}) - (-1)^{r}\frac{\eta_{1}/\eta}{\xi_{1}/\xi} B_{r,g}(z_{1})\in \mathbb{Q}(\sqrt{-t})
$$
and so
\begin{eqnarray}
\mathfrak{f} = \mathbb{Q}(\sqrt{-t} , \Sigma_{r,g})  & =& \mathbb{Q}(\sqrt{-t} , (-t-1)^{1/4} \frac{\eta}{\xi}) \\ \nonumber
& = & \mathbb{Q}(\sqrt{-t} , (-t+1-2\sqrt{-t})^{1/4}).
\end{eqnarray}
If we choose a complex number $X$ so that $\xi (X , 1) = \eta(X , 1)$ then $X  \in  \mathfrak{f}$ and
$$
P (X , 1) = \frac{1}{8}(\xi^{4}( X , 1) - \eta^{4}(X , 1) )= 0.
$$
Since we have assumed that $P$ is irreducible,  $X$ and $\Sigma_{r , g}$ both have degree $4$ over $\mathbb{Q}(\sqrt{-t})$.

 Suppose that $\Lambda_{r,1}^{4} \in \mathbb{Z}$. Then we have for some  $\rho, \rho_{1}  \in \{ \pm1 , \pm i \}$, that 
 $\Lambda_{r,1} = \rho \bar{\Lambda}_{r,1}$ and $(-t-1)^{1/4} = \rho_{1} \overline{(-t-1)^{1/4}})$, whence, from Lemma \ref{ai},
  \begin{eqnarray}\nonumber
  \Sigma_{r , 1} &= & (-t-1)^{1/4}\xi_{1}^{-4r} \xi_{2}^{-1} \rho \bar{\Lambda}_{r,1}  \\ \nonumber
& =& \xi_{1}^{-4r} \xi_{2}^{-1} \eta_{1}^{4r}\eta_{2}\rho \rho_{1}\left( \frac{\xi_{2}}{\eta_{2}} A_{r,1} \left(1 - \frac{\xi^{4}}{\eta^{4}} \right) - (-1)^{r}\frac{\xi_{1}}{\eta_{1}}B_{r , 1} \left( 1 - \frac{\xi^{4}}{\eta^{4}} \right) \right)\\  \nonumber
  &=& \rho\rho_{1} \frac{\eta_{1}^{4r}}{\xi_{1}^{4r}} \left(A_{r,1}\left( 1- \frac{\xi_{1}^{4}}{\eta_{1}^{4}}\right) - (-1)^{r}\frac{\xi_{1} \eta_{2}}{\xi_{2}\eta_{1}} B_{r ,1} \left( 1- \frac{\xi_{1}^{4}}{\eta_{1}^{4}}\right) \right) .
    \end{eqnarray}
    This together with Lemmas \ref{ai} and  \ref{24} imply that $\Sigma_{r,1} \in \mathbb{Q}(\sqrt{-t} , \rho \rho_{1})$, which contradicts the fact that $\Sigma_{r,1}$ has degree $4$ over $\mathbb{Q}(\sqrt{-t})$. We conclude that $\Lambda_{r,1}$ can not be a rational integer. 

From the well-known characterization of algebraic integers in quadratic fields, we may therefore conclude that,
if $\Lambda_{r,g} \neq 0$, $g \in \{ 0, 1 \}$, then
  \begin{equation}\label{ub}
 |\Lambda_{r,g}| \geq 2^{\frac{-g}{4}} \; t^{\frac{1}{2} - \frac{3g}{8}}.
 \end{equation}

\section{Three Auxiliary Lemmas} 
 
 We will now combine inequality (\ref{ub}) with upper bounds from Lemma \ref{hyp} to show that solutions to (\ref{2.1}) are widely spaced :
 
 \begin{lemma}\label{c}
If $\Sigma_{r,g} \neq 0$, then
$$
c_{1}(r,g)  \, |\xi_{1}|^{4r+1-g}|\xi_{2}|^{-3}+ c_{2}(r,g) \, |\xi_{1}|^{-4r-3(1-g)}|\xi_{2}| > 1,
$$ 
where we may take
$$
c_{1}(r,g) =  \frac{2^{2r+1+g/4}}{\sqrt{\pi r}} \, t^{5/4+3g/8} 
$$
and
$$
c_{2}(r,g) = \frac{2^{1/2+g/4- 2r} 3^{4r+2-2g}}{\pi \sqrt{r}} t^{4r+5/4-13g/8}.
$$
\end{lemma}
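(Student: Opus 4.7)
The proof rests on comparing the lower bound for $|\Lambda_{r,g}|$ coming from the integrality results of Section 5 against an upper bound derived from the Pad\'e approximation identity of Lemma \ref{hyp}. Since $\Sigma_{r,g}\ne 0$ forces $\Lambda_{r,g}\ne 0$, inequality (\ref{ub}) gives $|\Lambda_{r,g}|\ge 2^{-g/4}\,t^{1/2-3g/8}$, which unpacks (via the definition of $\Lambda_{r,g}$) to
\[
|\Sigma_{r,g}|\ \geq\ \frac{2^{-g/4}(t+1)^{1/4}\,t^{1/2-3g/8}}{|\xi_1|^{4r+1-g}\,|\xi_2|}.
\]

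For the matching upper bound, write $w_i=\eta_i/\xi_i$ (so $|w_i|=1$ and $w_i^4=1-z_i$), and choose the principal branch of $(1-z_i)^{1/4}$ so that, by the hypothesis that $(x_i,y_i)$ is related to $\omega$, we have $w_i=\omega(1-z_i)^{1/4}$. Substituting the identity (\ref{ABF}), written as $A_{r,g}(z_1)=(1-z_1)^{1/4}B_{r,g}(z_1)+z_1^{2r+1-g}F_{r,g}(z_1)$, into the definition of $\Sigma_{r,g}$ produces the decomposition
\[
\Sigma_{r,g}\ =\ w_2\,z_1^{2r+1-g}F_{r,g}(z_1)\ +\ B_{r,g}(z_1)\,\omega\,(1-z_1)^{1/4}\bigl[(1-z_2)^{1/4}-(-1)^r\bigr].
\]
The bracket has magnitude $O(|z_2|)$ once the parity of $r$ is absorbed into the branch of $(1-z_2)^{1/4}$, using that $(\pm\xi,\eta)$ and $(\pm i\xi,\eta)$ are equivalent choices of resolvent form.

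Now estimate each summand. The first term is bounded using (\ref{frog}), which gives $|z_1|\leq 8t^2/|\xi_1|^4$, together with Lemma \ref{hyp}(i), which supplies the estimate on $|F_{r,g}(z_1)|$ involving the factor $(1-|z_1|)^{-(2r+1-g)/2}$; this factor stays controlled because the constraint $xy>64t^3$ from Proposition \ref{tvw} forces $|\xi_1|^4$ to be much larger than $t^2$. The second term is bounded by the Taylor estimate $|(1-z_2)^{1/4}-1|\ll |z_2|\leq 8t^2/|\xi_2|^4$, together with $|B_{r,g}(z_1)|\leq\binom{2r-g}{r}$ obtained by applying the positive-coefficient argument used for $A_{r,g}$ in Lemma \ref{hyp}(ii) to $D_{r,g}(z)=B_{r,g}(1-z)$. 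Combining the resulting $|\Sigma_{r,g}|\leq E_1+E_2$ with the lower bound and clearing denominators yields an inequality of the desired shape $c_1(r,g)\,|\xi_1|^{4r+1-g}|\xi_2|^{-3}+c_2(r,g)\,|\xi_1|^{-4r-3(1-g)}|\xi_2|>1$, with $E_1$ supplying the $c_2$-contribution and $E_2$ the $c_1$-contribution.

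The main obstacle will be matching the explicit constants. The factor $2^{2r+1}/\sqrt{\pi r}$ in $c_1(r,g)$ originates from the Stirling-type bound $\binom{2r-g}{r}\leq 4^r/\sqrt{\pi r}$ for central binomial coefficients, whereas the $3^{4r+2-2g}/(\pi\sqrt{r})$ in $c_2(r,g)$ requires both a sharp asymptotic for the binomial ratio $\binom{r-g+1/4}{r+1-g}\binom{r-1/4}{r}/\binom{2r+1-g}{r}$ appearing in Lemma \ref{hyp}(i) and a careful bound on $(1-|z_1|)^{-(2r+1-g)/2}$ using the growth $|\xi_1|^4\gg t^2$; the stated $t$-exponents $5/4+3g/8$ and $4r+5/4-13g/8$ then fall out once the $(t+1)^{1/4}$ factor in the denominator of the lower bound is tracked carefully through the division.
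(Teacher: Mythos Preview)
Your approach is essentially the paper's---compare the algebraic lower bound (\ref{ub}) on $|\Lambda_{r,g}|$ against an analytic upper bound coming from the Pad\'e identity (\ref{ABF}), then simplify the binomial expressions via Stirling-type estimates---but with the substitution run the other way. The paper eliminates $B_{r,g}$ rather than $A_{r,g}$, writing
\[
\Sigma_{r,g}=\Bigl(\tfrac{\eta_2}{\xi_2}-\omega\Bigr)A_{r,g}(z_1)+\omega\,z_1^{\,2r+1-g}F_{r,g}(z_1),
\]
so that the small factor attached to the binomial-coefficient term is $|\eta_2/\xi_2-\omega|$, controlled directly by Lemma~\ref{lipp} with the explicit constant $\pi/12$, and the bound $|A_{r,g}(z_1)|\le\binom{2r-g}{r}$ is exactly Lemma~\ref{hyp}(ii). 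This is marginally cleaner than your version, which must extend the positive-coefficient argument to $D_{r,g}$ (it does go through, since $D_{r,g}(z)=\binom{r-g+1/4}{r-g}F(1/4-r,-r+g,5/4,z)$ visibly has nonnegative coefficients) and must replace the citation of Lemma~\ref{lipp} by a Taylor bound; note incidentally that $|\eta_2/\xi_2-\omega|=|(1-z_2)^{1/4}-1|$, so the two ``small factors'' are literally the same quantity. Your flag on the $(-1)^r$ is apt: the paper's displayed identity also holds as written only once one identifies $(1-z_1)^{1/4}$ with $(-1)^r\omega^{-1}\eta_1/\xi_1$, so the branch/resolvent-form bookkeeping is tacit in both treatments rather than avoided by either. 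For the constants, the paper carries out precisely what you sketch: $\binom{2r}{r}<4^r/\sqrt{\pi r}$ yields $c_1$, while $c_2$ comes from the inequality $\binom{r-g+1/4}{r+1-g}\binom{r-1/4}{r}<1/(\sqrt{2}\pi r)$ (proved by a telescoping product together with $\Gamma(1/4)\Gamma(3/4)=\sqrt{2}\pi$), the lower bound $\binom{2r+1-g}{r}\ge 4^r/(2\sqrt r)$, and the estimate $|z_1|(1-|z_1|)^{-1/2}\le 9t^2/|\xi_1|^4$, for which only $|\xi_1|^4>4(t+1)^{5/2}$ is needed rather than the stronger bound from $xy>64t^3$.
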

  
\begin{proof}
By (\ref{ABF}), we can write
$$
\left| (t+1)^{1/4} \, \Lambda_{r,g} \right|=  |\xi_{1}|^{4r+1-g}  |\xi_{2}|  \   \left| \left(\frac{\eta_{2}}{\xi_{2}} - \omega \right)A_{r,g}(z_{1}) + \omega z_{1}^{2r+1-g}F_{r,g}(z_{1}) \right|. 
$$
Since $|1 - z_{1}| = 1$ and $|z_{1}| \leq 1,$ from (\ref{Gap2}), (\ref{frog}), (\ref{F}),  (\ref{A}), and the inequality 
$$
|\xi_1|^4 > 4 \, (1+t)^{5/2},
$$
we have
$$
\left|(t+1)^{1/4} \, \Lambda_{r,g} \right| \leq  |\xi_{1}|^{4r+1-g}   |\xi_{2}| \left( {2r - g \choose r}   \frac{2 t^2}{|\xi^{4}_{2}|}  + \frac{{r-g+1/4 \choose r+1-g} {r - 1/4 \choose r}}{{2r+1-g \choose r}}               \left( \frac{9t^2}{|\xi^{4}_{1}|}\right)^{2r+1-g} \right) . 
$$
Comparing this with (\ref{ub}), we obtain
$$
c_{1}(r,g) \, |\xi_{1}|^{4r+1-g}|\xi_{2}|^{-3} + c_{2}(r,g) \, |\xi_{1}|^{-4r-3(1-g)}|\xi_{2}| > 1,
$$ 
where we may take $c_{1}$ and $c_{2}$ so that 
$$
c_{1}(r,g) \geq 2^{1+g/4} \, t^{5/4+3g/8} \,   {2r \choose r} 
$$
and
$$
c_{2}(r,g) \geq 2^{g/4} \, 3^{4r+2-2g} \, t^{4r+5/4-13g/8} \, \frac{{r-g+1/4 \choose r+1-g} {r - 1/4 \choose r}}{{2r+1-g \choose r}}.
$$
Applying the following version of Stirling's formula (see Theorem (5.44) of \cite{Str}) 
$$
\frac{1}{2\sqrt{k}}4^{k} \leq {2k \choose k} < \frac{1}{\sqrt{\pi k}}  4^{k},
$$
(valid for $k \in \mathbb{N}$) leads immediately to the stated choice of $c_1$.

To evaluate $c_{2}(r , g)$, we begin by noting that
$$
{2r+1-g \choose r} \geq {2r \choose r} \geq \frac{4^{r}}{2\sqrt{r}}.
$$
Next we will show that
$$
{r-g+1/4 \choose r+1-g} {r - 1/4 \choose r} < \frac{1}{\sqrt{2}\pi r},
$$
for $r \in \mathbb{N}$ and $g \in \{ 0 , 1\}$, whence we may conclude that
$$  \frac{{r-g+1/4 \choose r+1-g} {r - 1/4 \choose r}}{{2r+1-g \choose r}} < \frac{\sqrt{2}}{\sqrt{r}\pi 4^{r}}.
$$
This gives the desired value for $c_{2}(r , g)$. To bound  ${r-g+1/4 \choose r+1-g} {r - 1/4 \choose r}$, first we note that
   $$
   {r - 3/4 \choose r} >{r + 1/4 \choose r+1},
   $$
   for $r \in \mathbb{N}$.
  Put
   $$
   X_{r} = {r - 3/4 \choose r} {r - 1/4 \choose r}  = \frac{y_{r}}{r},
   $$
  whereby
  $$
  X_{r+1} = {r +1/4 \choose r+1} {r + 3/4 \choose r+1}   = \left( \frac{r^{2} + r + 2/9}{r^{2 }+ r} \right)\frac{y_{r}}{r+1}.
  $$
  Hence,
  $$ 
  y_{1} = 3/16   \     ,    \    y_{r} = 3/16 \prod_{k = 1}^{r -1} \frac{k^{2} + k + 3/16}{k^{2} + k}.
  $$
   Since
   $$
   \prod_{k = 1}^{\infty} \frac{k^{2} + k + 3/16}{k^{2} + k}  = \frac{16}{3 \Gamma(1/4) \Gamma(3/4)} =\frac{16}{3\sqrt{2}\pi},  
   $$
  we obtain
    $$
    X_{r} < \frac{1}{\sqrt{2}\pi r},
    $$
  which completes the proof.
   \end{proof}
 
We will also have need of the following :

\begin{lemma}\label{nv}
 If $r \in \mathbb{N}$ and $h \in \{ 0, 1 \}$, then at most one of $\left\{ \Sigma_{r,0}, \Sigma_{r+h,1} \right\}$ can vanish.
  \end{lemma}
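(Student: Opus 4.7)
The plan is to argue by contradiction: assume that both $\Sigma_{r,0}$ and $\Sigma_{r+h,1}$ vanish. I need two preliminary facts. First, the ratios $\eta_i/\xi_i$ are nonzero, since $|\eta_i/\xi_i|=1$ (as noted in the proof of Lemma~\ref{lipp}). Second,
$$
z_1 \;=\; 1 - \frac{\eta_1^4}{\xi_1^4} \;=\; \frac{8\,P(x_1,y_1)}{\xi_1^4}
$$
is nonzero, because $P(x_1,y_1) = t_1^2 > 0$ by Proposition~\ref{tvw}.

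Next I read the two hypotheses $\Sigma_{r,0}=0$ and $\Sigma_{r+h,1}=0$ as a homogeneous $2 \times 2$ linear system in the unknowns $\eta_2/\xi_2$ and $\eta_1/\xi_1$. Since those unknowns are nonzero, the coefficient determinant must vanish, and a short computation (using that the two "$-(-1)^{r}$" prefactors on the $B$-entries differ by the factor $(-1)^h$) yields, at $z = z_1$,
$$
A_{r+h,1}(z_1)\,B_{r,0}(z_1) \;=\; (-1)^{h}\,A_{r,0}(z_1)\,B_{r+h,1}(z_1). \qquad (\star)
$$

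For $h=0$, $(\star)$ is precisely the relation $A_{r,0}(z_1)B_{r,1}(z_1) = A_{r,1}(z_1)B_{r,0}(z_1)$ forbidden by Lemma~\ref{hyp}(iii) at the nonzero point $z_1$, and we are done. For $h=1$, $(\star)$ rearranges to
$$
A_{r,0}(z_1)\,B_{r+1,1}(z_1) + A_{r+1,1}(z_1)\,B_{r,0}(z_1) = 0.
$$
Combining this with the identity from the proof of Lemma~\ref{hyp}(iii), namely $A_{r,0}(z)B_{r+1,1}(z) - A_{r+1,1}(z)B_{r,0}(z) = c\,z^{2r+1}$ for some nonzero constant $c$, I deduce $A_{r,0}(z_1)/B_{r,0}(z_1) = -A_{r+1,1}(z_1)/B_{r+1,1}(z_1)$. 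But by (\ref{ABF}) and (\ref{F}) both ratios are Padé-type approximants to $(1-z_1)^{1/4} \neq 0$; using the easy bound $|z_1| \leq 8t^2/|\xi_1|^4 < 1$ from (\ref{frog}), each differs from the common value by an error that is too small (in particular of modulus $< \tfrac12 |(1-z_1)^{1/4}|$) to allow the two approximants to be negatives of one another, yielding the required contradiction.

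The main obstacle is the $h = 1$ case. The sign parity in the linear system prevents a clean one-line appeal to Lemma~\ref{hyp}(iii), and one must supplement that lemma with the quantitative Padé-approximation bounds of Lemma~\ref{hyp}(i) to rule out the possibility that two consecutive approximants to $(1-z_1)^{1/4}$ could be opposite in sign at $z_1$.
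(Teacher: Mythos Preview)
Your core device---viewing $\Sigma_{r,0}=\Sigma_{r+h,1}=0$ as a homogeneous $2\times 2$ system in the nonzero vector $(\eta_2/\xi_2,\ \eta_1/\xi_1)$, forcing the coefficient determinant to vanish---is exactly the mechanism behind the paper's matrix $\mathbf{M}$, and for $h=0$ your conclusion and the paper's coincide: the vanishing determinant contradicts Lemma~\ref{hyp}(iii) at $z_1\neq 0$.

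Where your write-up genuinely goes beyond the paper is the $h=1$ case. You correctly observe that the sign in $\Sigma_{r+h,1}$ is $(-1)^{r+h}$, so the determinant condition is
\[
A_{r+1,1}(z_1)\,B_{r,0}(z_1)\;=\;-\,A_{r,0}(z_1)\,B_{r+1,1}(z_1),
\]
which is the \emph{sum} $A_{r,0}B_{r+1,1}+A_{r+1,1}B_{r,0}=0$, not the difference that Lemma~\ref{hyp}(iii) forbids. (If one carries out the cofactor expansion of the paper's matrix $\mathbf{M}$ carefully, the same sign appears; the paper's direct appeal to Lemma~\ref{hyp}(iii) is therefore not fully justified when $h=1$.) Your remedy---use (\ref{ABF}) to see that both $A_{r,0}/B_{r,0}$ and $A_{r+1,1}/B_{r+1,1}$ approximate the same nonzero quantity $(1-z_1)^{1/4}$ to within less than half its modulus, hence cannot be negatives of one another---is the right idea and yields the lemma.

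The one place to tighten is the quantitative claim that the Pad\'e error is $<\tfrac12\,|(1-z_1)^{1/4}|=\tfrac12$, uniformly in $r$. This needs a lower bound on $|B_{r,g}(z_1)|$ (or an equivalent reformulation avoiding the ratio), which you do not supply. A clean way to close this is to substitute (\ref{ABF}) directly into the sum:
\[
A_{r,0}B_{r+1,1}+A_{r+1,1}B_{r,0}
=(1-z_1)^{-1/4}\Bigl(2A_{r,0}A_{r+1,1}-z_1^{2r+1}\bigl(z_1A_{r,0}F_{r+1,1}+A_{r+1,1}F_{r,0}\bigr)\Bigr),
\]
and then note that $A_{r,g}(0)=\binom{2r-g}{r}$, combined with the extreme smallness of $|z_1|<2^{-13}t^{-11/2}$ (from (\ref{frog}) and (\ref{raccoon})) and the bounds (\ref{A}), (\ref{F}), forces the first term to dominate for every $r\ge 1$. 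Stating this explicitly would make the $h=1$ case airtight.
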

\begin{proof}
    Let $r$ be a positive integer and  $h \in \{ 0 , 1\}$ . Following an argument of Bennett \cite{Ben}, we define the matrix $\mathbf{M}$:
   \begin{displaymath}
  \mathbf{M} =
  \left( \begin{array}{ccc}
  A_{r,0}(z_{1}) & A_{r+h , 1}(z_{1}) &(-1)^{r} \frac{\eta_{1}}{\xi_{1}} \\
  A_{r,0}(z_{1}) & A_{r+h , 1}(z_{1}) &(-1)^{r} \frac{\eta_{1}}{\xi_{1}} \\
 B_{r,0}(z_{1}) & B_{r+h , 1}(z_{1}) & \frac{\eta_{2}}{\xi_{2}} 
 \end{array} \right) .
 \end{displaymath}
 The determinant of $\mathbf{M}$ is zero because it has two identical rows. Expanding along the first row, we find that
  $$
  A_{r, 0}(z_{1}) \Sigma_{r+h , 1} -   A_{r+h, 1}(z_{1}) \Sigma_{r , 0} + (-1)^{r}\frac{\eta_{1}}{\xi_{1}}( A_{r, 0}(z_{1}) B_{r+h ,1}(z_{1})  - A_{r+h, 1}(z_{1}) B_{r ,0}(z_{1}) )
  $$ 
  vanishes and hence if $\Sigma _{r,0} = \Sigma _{r + h,1} = 0$,  then 
  $$
  A_{r, 0}(z_{1}) B_{r+h ,1}(z_{1})  - A_{r+h, 1}(z_{1}) B_{r ,0}(z_{1}) = 0,  
  $$
  contradicting part (iii) of Lemma \ref{hyp}.
 \end{proof}
          
 Our final result of this section follows similar lines to an argument of Evertse \cite{Ev}. We show :
 
 \begin{lemma} \label{mush}
 Suppose that $t > 204$. For $r \in \{1 , 2, 3, 4, 5\}$, we have
   $$
   \Sigma_{r,0} \neq 0.
   $$
   \end{lemma}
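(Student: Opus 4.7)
The plan is to suppose, for contradiction, that $\Sigma_{r,0} = 0$ for some $r \in \{1, 2, 3, 4, 5\}$ and $t > 204$. Applying the identity (\ref{ABF}) at $z = z_1$, I would first rewrite
$$\Sigma_{r,0} \;=\; (\eta_1/\xi_1)\,B_{r,0}(z_1)\bigl[\zeta_1(\eta_2/\xi_2) - (-1)^r\bigr] \;+\; (\eta_2/\xi_2)\,z_1^{2r+1}F_{r,0}(z_1),$$
where $\zeta_1 \in \{\pm 1, \pm i\}$ is the fourth root of unity such that $(1-z_1)^{1/4} = \zeta_1(\eta_1/\xi_1)$ for the principal branch. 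Because $t > 204$ forces $|z_1| \leq 8t^2/|\xi_1|^4$ to be very small (indeed, the constraint $x_1 y_1 > 64t^3$ from Proposition \ref{tvw} makes this tiny), the principal branch of $(1-z_1)^{1/4}$ is near $1$, and hence $\zeta_1 = \omega^{-1}$, where $\omega$ is the common fourth root to which both solutions are related by Lemma \ref{lipp}.

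I would next divide into two cases by the parity of $r$. If $r$ is odd (so $r \in \{1, 3, 5\}$), then $\zeta_1^{-1}(-1)^r = -\omega$, so that $|\zeta_1(\eta_2/\xi_2) - (-1)^r| = |\eta_2/\xi_2 - (-\omega)| \geq 2 - (\pi/12)|z_2|$, which is close to $2$. Combining the lower bound $|B_{r,0}(z_1)| \geq \binom{2r}{r} - O(|z_1|)$ with the upper bound on $|z_1|^{2r+1}|F_{r,0}(z_1)|$ from Lemma \ref{hyp}, I would verify that the first term in the displayed expression strictly dominates the second, yielding $|\Sigma_{r,0}| > 0$ and a direct contradiction.

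If $r$ is even (so $r \in \{2, 4\}$), the situation is subtler: $\zeta_1^{-1}(-1)^r = \omega$, so $|\zeta_1(\eta_2/\xi_2) - (-1)^r| = |\eta_2/\xi_2 - \omega|$ is small. Here I would convert $\Sigma_{r,0} = 0$ into the asymptotic identity $|z_2| = 4|z_1|^{2r+1}|F_{r,0}(z_1)|/|B_{r,0}(z_1)|$ (using $|\eta_2/\xi_2 - \omega| = |z_2|/4 + O(|z_2|^2)$) and supplement this with the algebraic identity $\Lambda_{r,0} = 4(-1)^r\sqrt{-t}\,(D B^{(0)} - S B^{(1)})$, where $S = x_1 x_2 + t y_1 y_2$, $D = x_1 y_2 - x_2 y_1$, and $B^{(0)}, B^{(1)} \in \mathbb{Z}$ are the rational and $\sqrt{-t}$-parts of $B^*_{r,0}(\xi_1^4, 8P_1)$. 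The vanishing $\Sigma_{r,0} = 0$ is thus equivalent to the single Diophantine equation $DB^{(0)} = SB^{(1)}$, which pins down $y_2/x_2$ as a specific rational in $x_1, y_1, t$. I would then compute $B^{(0)}, B^{(1)}$ explicitly for $r = 2$ and $r = 4$ and show that the forced $(x_2, y_2)$, combined with $P(x_2, y_2) \leq t^2$ and $x_2 y_2 > 64t^3$, is inconsistent for $t > 204$.

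The main obstacle is the even case $r \in \{2, 4\}$: the gap principle (\ref{Gap}) alone is insufficient, and one must exploit the precise integrality of $\Lambda_{r,0}$ in $\mathbb{Z}\sqrt{-t}$ along with the quite restrictive size condition $x_2 y_2 > 64 t^3$. The key will be showing that the components $B^{(0)}, B^{(1)}$ grow in $|\xi_1|$ sufficiently fast that the unique $(x_2, y_2)$ determined by the linear equation $DB^{(0)} = SB^{(1)}$ either fails to be positive, fails to be coprime, or has $P(x_2, y_2)$ far exceeding $t^2$ for $t > 204$.
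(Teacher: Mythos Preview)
Your parity split is a genuine observation that the paper does not make: for odd $r\in\{1,3,5\}$ your argument is complete and considerably simpler than what the paper does. Once $(1-z_1)^{1/4}=\omega^{-1}\eta_1/\xi_1$ is established (which indeed follows from $|z_1|<2^{-13}t^{-11/2}$, a consequence of $x_1y_1>64t^3$ and (\ref{raccoon})), the bracket $\zeta_1(\eta_2/\xi_2)-(-1)^r$ has modulus near $2$, $|B_{r,0}(z_1)|$ is near $\binom{2r}{r}$, and the $z_1^{2r+1}F_{r,0}(z_1)$ term is negligible, so $|\Sigma_{r,0}|\approx 2\binom{2r}{r}\neq 0$. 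The paper instead runs a uniform argument for all five values of $r$: assuming $\Sigma_{r,0}=0$ it deduces $\eta_2^4/\xi_2^4=\eta_1^4(B_r^*)^4\big/\xi_1^4(A_r^*)^4$, forms the ideal $\mathfrak{I}_r$ in $\mathbb{Q}(\sqrt{-t})$ generated by $\xi_1^4(A_r^*)^4$ and $\eta_1^4(B_r^*)^4$, and proves explicit bounds of the shape $N(\mathfrak{I}_r)^{1/2}|\xi_1^4-\eta_1^4|^{-(4r+1)}\le C_r$ via resultant-type syzygies between $A_r^*$ and $B_r^*$. Combined with the polynomial identity $A_r^4-(1-z)B_r^4=z^{2r+1}F_r(z)$, this yields an upper bound on $|\xi_1|^{8r}$ in terms of $t$ alone, contradicting (\ref{raccoon}). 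Your odd-$r$ argument bypasses all of this for three of the five cases.

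For even $r\in\{2,4\}$, however, your proposal has a genuine gap. Your identity $\Lambda_{r,0}=4(-1)^r\sqrt{-t}\,(DB^{(0)}-SB^{(1)})$ is correct, and the reduction of $\Sigma_{r,0}=0$ to the single linear relation $DB^{(0)}=SB^{(1)}$ is valid. But $B^{(0)}$ and $B^{(1)}$ are the rational and $\sqrt{-t}$-components of $B_{r,0}^*(\xi_1^4,\,8P(x_1,y_1))$; they are degree-$4r$ polynomials in the \emph{unknown} first solution $(x_1,y_1)$, not explicitly computable numbers. The ratio $x_2/y_2=(x_1B^{(0)}-ty_1B^{(1)})/(y_1B^{(0)}+x_1B^{(1)})$ is therefore an unknown algebraic function of $(x_1,y_1,t)$, and you give no mechanism---no uniform size estimate in $|\xi_1|$, no reduction to finitely many cases---for showing that the resulting $(x_2,y_2)$ violates $0<P(x_2,y_2)\le t^2$ or $x_2y_2>64t^3$. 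This is precisely the difficulty the paper's norm computation resolves: by bounding $N(\mathfrak{I}_r)$ it extracts an inequality involving only $|\xi_1|$ and $t$, eliminating the dependence on the second solution entirely. Your plan for even $r$ would need either a comparable estimate or a substantially new idea to close.
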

   \begin{proof}
   
   Let $r \in \{1 , 2,  3, 4, 5 \}$ and suppose that $\Sigma_{r , 0} = 0$. From (\ref{ABF}),  for each
  $r$, the polynomial 
  $$
  A_{r,0}(z)^{4} - (1 -z)B_{r,0}^{4}
  $$
  has a zero at $0$ of order at least $2r+1$. We can thus find polynomials
   $A_r(z), B_r(z)$ and $F_{r}(z) \in \mathbb{Z}[z]$, satisfying 
 $$
 A_{r}(z)^{4} - (1 -z)B_{r}^{4} = z^{2r+1} F_{r}(z).
 $$
 In fact, we have
 $$
 A_{1}(z) = 4 A_{1, 0}(z) = 8 - 5z , 
 $$
 $$
B_{1}(z) = 4 B_{1 , 0}(z) = 8 - 3z , 
$$
$$
F_{1}(z) = 320 - 320z + 81z^{2} ,
$$
 $$
 A_{2}(z) = \frac{32}{3} A_{2,0}(z) = 64 -72z + 15 z^{2}, 
 $$
  $$
  B_{2}(z)= \frac{32}{3} B_{2,0}(z) = 64 -56z + 7 z^{2},
  $$ 
 $$F_{2}(z) = 86016 - 172032z + 114624z^{2} - 28608z^{3} + 2401z^{4},
 $$
 $$
 A_{3}(z)= 128A_{3,0}(z)= 2560 - 4160z + 1872z^{2} -195z^{3},
  $$ 
   $$
  B_{3}(z)= 128B_{3,0}(z)=2560 - 3520z + 1232z^{2} - 77z^{3}, 
  $$ 
  \begin{eqnarray*}
  F_{3}(z) = & & 14057472000 - 42172416000z + 48483635200z^{2} - 26679910400z^{3}\\ 
  & & + 7150266240z^{4}  - 839047040z^{5} + 35153041z^{6},  
\end{eqnarray*}
 $$
  A_{4}(z)=  \frac{2048}{5}A_{4,0}(z) =  28672 - 60928z + 42432z^{2} - 10608z^{3} + 663z^{4},
 $$
  $$
  B_{4}(z)=  \frac{2048}{5}B_{4,0}(z) =  28672 - 53760z + 31680z^{2} -6160z^{3} +  231z^{4},
  $$
  \begin{eqnarray*}
  F_{4}(z) =  &13989396348928 - 55957585395712z + 91916125077504z^{2} \\
  & - 79896826347520z^{3}+39463764078592z^{4} -11050000539648z^{5} \\
    & + 1648475542656z^{6}- 113348764800z^{7} + 2847396321z^{8}, 
\end{eqnarray*}
  $$
  A_{5}(z) = \frac{8192}{21}A_{5,0}(z) = 98304 - 258048z + 243712z^{2} - 99008z^{3} + 15912z^{4} - 663z^{5},
  $$
  $$
  B_{5}(z) = \frac{8192}{21}B_{5,0}(z) = 98304 - 233472z + 194560z^{2} - 66880z^{3} + 8360z^{4} -209z^{5}.
  $$
  and
\begin{eqnarray*}
 F_{5}(z) =  & 121733331812352  - 608666659061760z + 1301756554248192z^{2} \\
 & -1555026262622208z^{3} +1136607561252864z^{4}   - 523630732640256z^{5} \\
 & + 151029162176512z^{6} -26204424888320z^{7} +2515441608384z^{8} \\
  &  - 113971885760z^{9} +  1908029761z^{10}.
 \end{eqnarray*}
 We also define $A_{r}^{*}$ and $B_{r}^{*}$ via
 $$
 A_{r}^{*}(x , y) = x^{r}A_{r}(y/x) \; \mbox{ and } \;  B_{r}^{*}(x , y) = x^{r}B_{r}(y/x).
 $$
 Since $\Sigma_{r , 0} $ is assumed to be zero,
 $$
 \frac{\eta_{2}^{4}}{\xi_{2}^{4}} = \frac{\eta_{1}^{4} (B_{r}^{*}(\xi_{1}^{4}, \xi_{1}^{4} -\eta_{1}^{4}))^{4}} {\xi_{1}^{4} (A_{r}^{*}(\xi_{1}^{4} ,  \xi_{1}^{4} - \eta_{1}^{4}))^{4}} .
 $$
 Let $\mathfrak{I}_{r}$ be the integral ideal in $\mathbb{Q}(\sqrt{-t})$ generated by  $\xi_{1}^{4} (A_{r}^{*}(\xi_{1}^{4} ,  \xi_{1}^{4} - \eta_{1}^{4}))^{4}$ and $\eta_{1}^{4} (B_{r}^{*}(\xi_{1}^{4} ,  \xi_{1}^{4} - \eta_{1}^{4}))^{4}$, and $N(\mathfrak{I}_{r})$ be the absolute norm of $\mathfrak{I}_{r}$. Since the ideal generated by $\xi_{1}^{4} (A_{r}^{*}(\xi_{1}^{4} ,  \xi_{1}^{4} - \eta_{1}^{4}))^{4} - 
\eta_{1}^{4}( B_{r}^{*}(\xi_{1}^{4} ,  \xi_{1}^{4} - \eta_{1}^{4}))^{4}$ divides  $(\xi_{2}^{4} -\eta_{2}^{4})  \mathfrak{I}_{r}$, we obtain 
$$
|\xi_{1}|^{4(4r+1)} \left| A_{r}^{4}(z_{1}) - (1 - z_{1}) B_{r}^{4}(z_{1})\right|  =   \left|\xi_{1}^{4} (A_{r}^{*}(\xi_{1}^{4} ,  \xi_{1}^{4} - \eta_{1}^{4}))^{4} - \eta_{1}^{4} (B_{r}^{*}(\xi_{1}^{4} ,  \xi_{1}^{4} - \eta_{1}^{4}))^{4}\right|.
$$
From the fact that  $\mathfrak{I}_{r}$ is an imaginary quadratic field,
$$
|\xi_{1}|^{4(4r+1)} | A_{r}^{4}(z_{1}) - (1 - z_{1}) B_{r}^{4}(z_{1})| \leq  N(\mathfrak{I}_{r})^{1/2} |\xi_{2}^{4} - \eta_{2}^{4}| . 
$$
By  (\ref{ABF}),
$$
A_{r}^{4}(z_{1}) -  (1 - z_{1}) B_{r}^{4}(z_{1}) =  z_{1}^{2r+1} F_{r}(z_{1}),
$$
so we may conclude
$$
 |z_{1}|^{2r+1} |F_{r}(z_{1})| \leq  N(\mathfrak{I}_{r})^{1/2} |\xi_{2}^{4} - \eta_{2}^{4}| |\xi_{1}|^{-4(4r+1)} ;
 $$
i.e.
 $$
 1 \leq  \frac{ N(\mathfrak{I}_{r})^{1/2} |\xi_{2}^{4} - \eta_{2}^{4}| |\xi_{1}|^{-4(4r+1)}}{ |z_{1}|^{2r+1} |F_{r}(z_{1})| }.
 $$
Since $|z_{1}| = |\xi_{1}^{-4}||\xi_{1}^{4} - \eta_{1}^{4}|$ and $|\xi_{i}^{4} - \eta_{i}^{4}| = 8 P(x , y)$, it follows after a little work that
\begin{equation}\label{44}
|\xi_{1}|^{8r}  \leq  N(\mathfrak{I}_{r})^{1/2} \left| \xi_{1}^{4} - \eta_{1}^{4} \right|^{-4r-1} \left( 8P(x,y) \right)^{2r+1} \left| F_{r}(z_{1}) \right|^{-1}.
\end{equation}

To estimate  $N(\mathfrak{I}_{r})^{1/2} $, we choose a finite extension $\mathbf{M}$ of $\mathbb{Q}(\sqrt{-t})$ so that the ideal generated by $\xi_{1}^{4}$ and $\xi_{1}^{4} - \eta_{1}^{4}$ in $\mathbf{M}$ is a principal ideal, with generator $p$, say. We denote the extension of $\mathfrak{I}_{r}$ to $\mathbf{M}$, by $\mathfrak{I}'_{r}$. Let $\mathfrak{r}_{r}$ be the ideal in $\mathbf{M}$ generated by $A_{r}^{*}(u , v)$ and $B_{r}^{*}(u, v)$, where $u = \frac{\xi_{1}^{4}}{p}$ and  $v = \frac{\xi_{1}^{4} - \eta_{1}^{4}}{p}$. 
Since $A_{r}^{*}(x , x- y) = B_{r}^{*}(y , y-x)$,
\begin{eqnarray}\label{ss}
p^{4r+1}\mathfrak{r}_{r}^{4} B_{r}^{*}(0 , 1)^{4} &\subset &  p^{4r+1}\mathfrak{r}_{r}^{4} (u , B_{r}^{*}(0 , v)^{4}) ( u - v, B_{r}^{*}(0 , v)^{4})  \\ \nonumber
 &\subset &   p^{4r+1}\mathfrak{r}_{r}^{4} (u , B_{r}^{*}(0 , v)^{4}) ( u - v, A_{r}^{*}(v , v)^{4})  \\ \nonumber
 &\subset &   p^{4r+1}\mathfrak{r}_{r}^{4}( u , u - v) (u , B_{r}^{*}(u , v)^{4}) ( u - v, A_{r}^{*}(u , v)^{4})  \\ \nonumber
 &\subset & p^{4r+1} (uA^{*}(u , v)^{4} , (u - v)B_{r}^{*}(u , v)^{4}) = \mathfrak{I}'_{r},
 \end{eqnarray}
where $(m_{1} , \ldots , m_{n})$  denote the ideal in $\mathbf{M}$ generated by $m_{1} , \ldots , m_{n}$. 

We have
$$
A_{1}^{*}( x , y) - B_{1}^{*}(x , y) = -2y. 
$$
Therefore, 
$$
2(v) \subset (A_{1}^{*}( u , v) , B_{1}^{*}(u , v)) \subset  \mathfrak{r}_{1}, 
$$ 
where $(v)$ is the ideal generated by $v$ in $\mathbf{M}$.
Since $B_{1}^{*}(0 , 1) = -3$, it follows from (\ref{ss}) that 
$$
1296(\xi_{1}^{4} - \eta_{1}^{4})^{5} \subset 1296 p (\xi_{1}^{4} - \eta_{1}^{4})^{4}  = p^{5}16v^{4}B_{1}^{*}(0 , 1)^{4} \subset \mathfrak{I}'_{1}. 
$$
For $r = 2$, we first observe that 
$$
B_{1}^{*}(x , y)A_{2}^{*}( x , y) -A_{1}^{*}(x , y) B_{2}^{*}(x , y) =-10y^{3}
$$
and 
$$
(-32x + 7y)A_{2}^{*}( x , y) -(-32x+15y) B_{2}^{*}(x , y) = 80xy^{2}.
$$
Therefore, by (\ref{ss}) we have
$$
80(v)^{2}  \subset (-10v^{3} , 80uv^{2})  \subset (A_{2}^{*}( u , v) , B_{2}^{*}(u , v)) \subset  \mathfrak{r}_{2} .
$$ 
Since $B_{2}^{*}(0 , 1) =7$, we have 
$$  
80^{4}\times 7^{4} (\xi_{1}^{4} - \eta_{1}^{4})^{9}  \subset  80^{4}\times 7^{4} p (\xi_{1}^{4} - \eta_{1}^{4})^{8}  = 80^{4}p^{9}v^{8}  B_{2}^{*}(0 , 1)^{4} \subset \mathfrak{I}'_{2}.
$$
When $r = 3$, we have 
$$
B_{2}^{*}(x , y)A_{3}^{*}( x , y) -A_{2}^{*}(x , y) B_{3}^{*}(x , y) =-210y^{5}
$$
$$
(1616x^{2}-1078xy+77y^{2})A_{3}^{*}(x , y) - (1616x^{2}-1482xy+195y^{2}) B_{3}^{*}(x,y)= -16800x^{2}y^{3}.
$$
Substituting $77$ for $B_{3}^{*}(0 , 1)$, we conclude  
$$  
16800^{4}\times 77^{4} (\xi_{1}^{4} - \eta_{1}^{4})^{13}  \subset  16800^{4}\times 77^{4} p (\xi_{1}^{4} - \eta_{1}^{4})^{12}  = 16800^{4}p^{13}v^{12}  B_{3}^{*}(0 , 1)^{4} \subset \mathfrak{I}'_{3}.
$$
For $r = 4$, setting 
\begin{eqnarray*}
G_{4}(x , y)=14178304x^{3}-15889280x^{2}y+4071760xy^{2}-162393y^{3}, \\
H_{4}(x , y) = 14178304x^{3}-19433856x^{2}y+6714864xy^{2}- 466089y^{3},
 \end{eqnarray*}
we may verify that 
$$
B_{3}^{*}(x , y)A_{4}^{*}( x , y) -A_{3}^{*}(x , y) B_{4}^{*}(x , y) =-6006y^{7}
$$
and 
$$
G_{4}(x , y)A_{4}^{*}(x , y) - H_{4}(x , y)B_{4}^{*}(x , y) = -150678528y^{4}x^{3}.
$$
This implies that
$$
150678528 ^{4}\times 231^{4} (\xi_{1}^{4} - \eta_{1}^{4})^{17}  \subset  150678528^{4}\times 231^{4} p (\xi_{1}^{4} - \eta_{1}^{4})^{16}.
$$
Since this latter quantity is equal to $150678528^{4}p^{17}v^{16}  B_{4}^{*}(0 , 1)^{4}$, it follows that
$$
150678528 ^{4}\times 231^{4} (\xi_{1}^{4} - \eta_{1}^{4})^{17}  \subset \mathfrak{I}'_{4}.
$$
Finally, for $r = 5$, we have 
$$
B_{4}^{*}(x , y)A_{5}^{*}( x , y) -A_{4}^{*}(x , y) B_{5}^{*}(x , y) =-14586y^{7}
$$
and 
$$
G_{5}(x , y)A_{5}^{*}(x , y) - H_{5}(x , y)B_{5}^{*}(x , y) = - 134424576 y^{5}x^{4},
$$
where
\begin{eqnarray*}
G_{5}(x , y) = 43706368x^{4}-69346048x^{3}y+32767856x^{2}y^{2}-4764782{x}y^{3}+123519y^{4},\\ 
H_{5}(x , y)=43706368x^{4}-80272640x^{3}y+46006896x^{2}y^{2}-8845746xy^{3}+391833y^{4} .                                   
 \end{eqnarray*}  
 This implies that
$$
134424576^{4}\times 209^{4} (\xi_{1}^{4} - \eta_{1}^{4})^{21}  \subset 134424576^{4}\times 209^{4} p (\xi_{1}^{4} - \eta_{1}^{4})^{20} 
$$
whereby
$$
134424576^{4}\times 209^{4} (\xi_{1}^{4} - \eta_{1}^{4})^{21}  \subset  134424576 ^{4}p^{21}v^{20}  B_{5}^{*}(0 , 1)^{4} \subset \mathfrak{I}'_{5}.
$$

From the preceding arguments, we are thus able to deduce the following series of inequalities :
$$
N(\mathfrak{I}_{1})^{1/2} |\xi_{1}^{4} - \eta_{1}^{4}|^{-5} \leq 1296,
$$
$$
N(\mathfrak{I}_{2})^{1/2} |\xi_{1}^{4} - \eta_{1}^{4}|^{-9} \leq 560^{4},
$$
$$
N(\mathfrak{I}_{3})^{1/2} |\xi_{1}^{4} - \eta_{1}^{4}|^{-13} \leq (77\times16800)^{4},
$$
$$
N(\mathfrak{I}_{4})^{1/2} |\xi_{1}^{4} - \eta_{1}^{4}|^{-17} \leq (231\times150678528)^{4}
$$
and
 $$
 N(\mathfrak{I}_{5})^{1/2} |\xi_{1}^{4} - \eta_{1}^{4}|^{-21} \leq (209 \times 134424576)^{4}.
 $$
 These will enable us to contradict inequality (\ref{44}) for $r \leq 5$, provided we can find a suitably strong lower bound for $|\xi_{1}|$. Since $\xi_i^{4} = 4 (\sqrt{-t} + 1) (x_i - \sqrt{-t}y_i)^{4}$ and $x_1y_1 > 64 t^3$, via calculus we have that
\begin{equation} \label{raccoon}
|\xi_{1}|^{4} > 2^{16} \, t^{15/2},
\end{equation}
whence (\ref{44}) and the assumption that $P(x,y) \leq t^2$ imply 
\begin{equation} \label{idol}
2^{26r-3} t^{11r-2}  <  N(\mathfrak{I}_{r})^{1/2} \left| \xi_{1}^{4} - \eta_{1}^{4} \right|^{-4r-1} \left| F_{r}(z_{1}) \right|^{-1}.
\end{equation}
From (\ref{raccoon}), we have
$$
|z_{1}| = \left|\frac{8P (x , y)}{\xi_{1}^{4}}\right| < \left( 2^{13} \, t^{11/2} \right)^{-1} < 0.001,
$$
and consequently,
$$
F_{1}(z_{1}) > 10^{2} , \   F_{2}(z_{1}) > 10^{4}, \   F_{3}(z_{1}) > 10^{10}, \   F_{4}(z_{1}) > 10^{13}  \ \mbox{ and } \ F_{5}(z_{1}) > 10^{14}.
$$
In case $r=1$, inequality (\ref{idol}) thus implies that
$$
2^{23} t^9 < 6635.52 \times t^6,
$$
a contradiction for all $t$. Arguing similarly for $r=2, 3, 4$ and $5$, and noting that $t > 204$, completes the proof of Lemma \ref{mush}.
\end{proof}
         
\section{The Proof of Theorem \ref{main}}

Assume that there are two distinct coprime solutions $(x_{1} , y_{1})$ and $(x_{2} , y_{2})$ to inequality (\ref{2.1}) with $|\xi_{2}| > |\xi_{1}|$. We will show that $|\xi_{2}|$ is arbitrary large in relation to $|\xi_{1}|$. In particular, we will demonstrate via induction that
\begin{equation} \label{goat}
 |\xi_{2}| >\frac{\sqrt{r}}{5 \, t^{4r+7/4}} \left( \frac{4}{81} \right)^r \,  |\xi_{1}|^{4r + 3},
\end{equation}
for each positive integer $r$.
Since inequality (\ref{raccoon}) thus implies that
$$
 |\xi_{2}| >  t^{7r/2+31/8},
 $$
 for arbitrary $r$, we deduce an immediate contradiction.
 
 We first prove inequality (\ref{goat}) for $r=1$. By (\ref{Gap}) and (\ref{raccoon}),
  $$
  c_{1}(1 ,0) \, |\xi_{1}|^{5}|\xi_{2}|^{-3} < 2^{-13} \pi^{-1/2} t^{-5/2} < 0.1,
  $$
and hence,  since $\Sigma_{1,0} \neq 0$, Lemma \ref{c} yields
$$
c_{2}(1 , 0) |\xi_{1}|^{-7}|\xi_{2}| > 0.9,
$$
which, after a little work, implies (\ref{goat}).

We now proceed by induction. Suppose that (\ref{goat}) holds for some $r \geq 1$. Then
$$
c_{1}(r+1 ,0) |\xi_{1}|^{4r+5}|\xi_{2}|^{-3} < \frac{2000}{\sqrt{\pi} r^2} \, t^{12r+13/2} \, \left( \frac{3^{12}}{2^4} \right)^r \, |\xi_{1}|^{-8r-4},
$$
and hence, from (\ref{raccoon}),
$$
c_{1}(r+1 ,0) |\xi_{1}|^{4r+5}|\xi_{2}|^{-3} < \frac{125}{2^{12} \sqrt{\pi} r^2} \, t^{-3r-1} \, \left( \frac{3^{12}}{2^{36}} \right)^r < 0.1.
$$
If $\Sigma_{r+1 , 0} \neq 0$, then  by Lemma \ref{c},
$$ 
c_{2}(r+1 , 0) |\xi_{1}|^{-4(r+1)-3}|\xi_{2}| > 0.9,
$$
which leads to inequality (\ref{goat}) with $r$ replaced by $r+1$. If, however, $\Sigma_{r+1, 0} = 0$ then  by Lemmas \ref{nv} and \ref{mush}, both $\Sigma_{r+1,1}$ and $\Sigma_{r+2 ,1}$ are nonzero, and $r \geq 5$. Using the induction hypothesis, we find as previously that
$$
c_{1}(r + 1,1)  |\xi_{1}|^{4r+4}|\xi_{2}|^{-3} < 0.1
$$
and thus by Lemma \ref{c}  conclude that
$$
c_{2}(r+1,1) |\xi_{1}|^{-4r-4}|\xi_{2}| >0.9.
$$
It follows that
$$
|\xi_{2}| > 0.08 \times \frac{\sqrt{r+1}}{ t^{4r+29/8}} \, \left( \frac{4}{81} \right)^r \, |\xi_{1}|^{4r+4}.
$$
Consequently,
$$
c_{1}(r+2 , 1) |\xi_{1}|^{4r+8}|\xi_{2}|^{-3} < \frac{ 24000}{(r+1)^2} \, \left( \frac{3^{12}}{2^4} \right)^r \, t^{12r+25/2} \,  |\xi_{1}|^{-8r-4},
$$
whereby, from (\ref{raccoon}) and the fact that $r \geq 5$, 
$$
c_{1}(r+2 , 1) |\xi_{1}|^{4r+8}|\xi_{2}|^{-3} < \frac{1}{2 (r+1)^2} \, \left( \frac{3^{12}}{2^{36}} \right)^r \, t^{5-3r} < 0.1.
$$
 Lemma \ref{c} thus implies the inequality
$$
c_{2}(r+2,1) |\xi_{1}|^{-4r-8}|\xi_{2}| > 0.9
$$
and so
$$
|\xi_{2}| > 0.08 \, \sqrt{r+1} \, \left( \frac{4}{81} \right)^{r+1} t^{-4r-61/8} |\xi_{1}|^{4r+8}.
$$
From (\ref{raccoon}), it follows that
\begin{equation*}
|\xi_{2}| > \frac{\sqrt{r+1}}{5t^{4r+4+7/4}}\left( \frac{4}{81} \right)^{r+1} |\xi_{1}|^{4r+7},
\end{equation*}
as desired. This completes the proof of inequality (\ref{goat}) and hence we conclude that there is at most one solution to (\ref{2.1}) related to each fourth root of unity.

To finish the proof of Theorem \ref{main}, it is enough to show that three of the roots of unity under consideration do not have solutions to (\ref{2.1}) associated to them. We recall that the polynomial
$$
P(x, 1) =  x^{4} + 4tx^{3} -6tx^{2} - 4t^{2}x + t^{2}
$$
has $4$ real roots $\beta_{1}$, $\beta_{2}$, $\beta_{3}$, $\beta_{4}$, say, where
\begin{eqnarray}\nonumber
\sqrt{t}+\frac{1}{2}+\frac{1}{8\sqrt{t} }- \frac{2}{8t}   <   \beta_{1} <  \sqrt{t}+\frac{1}{2}+\frac{1}{8\sqrt{t}} - \frac{1}{8t} \\ \nonumber
-\sqrt{t}+\frac{1}{2}-\frac{1}{8\sqrt{t}}- \frac{1}{8t}  <  \beta_{2}  <  -\sqrt{t}+\frac{1}{2}-\frac{1}{8\sqrt{t}} \\ \nonumber
\frac{1}{4} - \frac{5}{64t} + \frac{22}{512t^{2}}  <  \beta_{3}  <   \frac{1}{4} - \frac{5}{64t} + \frac{23}{512t^{2}} \\ \nonumber
-4t -\frac{5}{4}+\frac{21}{64t}-\frac{87}{512t^{2}}  <  \beta_{4}  < -4t -\frac{5}{4}+\frac{21}{64t}-\frac{84}{512t^{2}} .
\end{eqnarray}
(since $t \geq 18$, the polynomial $P(x , 1)$ changes sign between the given bounds).
Since
$$
P(\beta_{i} , 1) = \frac{1}{8} ( \xi^{4}(\beta_{i} , 1) - \eta^{4}(\beta_{i} , 1) ) = 0,
$$
it follows that, for each $1 \leq i \leq 4$,
$\frac{\eta(\beta_{i} , 1)}{\xi(\beta_{i} , 1)}$ is a fourth root of unity.  Noting that
$$
\frac{\eta(\beta_{i} , 1)}{\xi(\beta_{i} , 1)} - \frac{\eta(\beta_{j} , 1)}{\xi(\beta_{j} , 1)} = \left(\frac{\sqrt{-t}-1}{\sqrt{-t}+1} \right)^{1/4} \frac{2\sqrt{-t} (\beta_{j} - \beta_{i})}{(\beta_{i}-\sqrt{-t}) (\beta_{j} - \sqrt{-t})},
$$
they are in fact distinct. We now proceed to show that solutions to (\ref{2.1}) necessarily correspond to fourth roots of unity related to $\beta_2$.

In  \cite{wal}, it is shown that for $\{V_{2n+1}\}$ defined in Section \ref{sec2}, the equation $z^{2} = V_{4n+1}$ has no solution.  Supposing that $z^{2} = V_{4n+3}$, as  in the proof of Proposition \ref{tvw}, there exist integers $t_{1}$, $t_{2}$, $G$ and $H$,  so that the integers $x, y$ arising from Proposition \ref{tvw} satisfy $x=-t_{1}G$ and $y = H$. We have
\begin{eqnarray*}
\frac{x}{y} = \frac{-t_{1}G}{H} = \frac{-2t_{1}G^{2}}{2GH} & = & -\frac{\sqrt{V_{4n+3}} - V_{2n+1}}{U_{n+1}} \\
&= &-\frac{\sqrt{V_{2n+2}^{2}+V_{2n+1}^{2}} - V_{2n+1}}{\frac{V_{2n+2}}{\sqrt{t}}}\\
&=& -\sqrt{t} \left( \sqrt{1 + \frac{V_{2n+1}^{2}}{V_{2n+2}^{2}}} - \frac{V_{2n+1}}{V_{2n+2}} \right),\\
\end{eqnarray*}
using the fact that $V_{2n+2}= \sqrt{t} \, U_{n+1}$. Thus
$$
\left| \frac{x}{y}+\sqrt{t} \right| = \sqrt{t} \,  \left| \sqrt{1 + \frac{V_{2n+1}^{2}}{V_{2n+2}^{2}}} - \frac{V_{2n+1}}{V_{2n+2}}-1 \right|.
$$
A crude application of the Mean Value Theorem therefore implies that
$$
\left| \frac{x}{y}+\sqrt{t} \right| < \sqrt{t}
$$
and consequently, $x/y \in (-2 \sqrt{t}, 0)$, whereby the inequalities for $\beta_i$ yield
$$
\left| \frac{x}{y} - \beta_{1} \right| \geq  \left| \sqrt{t} + \beta_{1} \right| -   \left| \frac{x}{y} + \sqrt{t} \right| > 2\sqrt{t} -\sqrt{t} = \sqrt{t},
$$
$$
\left| \frac{x}{y} - \beta_{3} \right| \geq  \left| \sqrt{t} + \beta_{3} \right| -   \left| \frac{x}{y} + \sqrt{t} \right| > \sqrt{t}+\frac{1}{5} - \sqrt{t} = \frac{1}{5}
$$ 
and 
$$
\left| \frac{x}{y} - \beta_{4} \right| \geq  \left| \sqrt{t} + \beta_{4} \right| -   \left| \frac{x}{y} + \sqrt{t} \right| > 3t - \sqrt{t} > 2t.
$$ 

Let $\beta \in \{\beta_1, \beta_{3} , \beta_{4}\}$. We have just shown that if $(x , y)$ is a solution to inequality (\ref{2.1}), then
$$
\left|\frac{x}{y} - \beta \right| > \frac{1}{5}.
$$
If we suppose that $\omega = \frac{\eta(\beta,1)}{\xi(\beta , 1)}$, then
$$
\left|\omega - \frac{\xi(x , y)}{\eta(x , y)} \right| = \left|\frac{\eta(\beta , 1)}{\xi(\beta , 1)} - \frac{\eta(\frac{x}{y} , 1)}{\xi(\frac{x}{y} , 1)} \right| = \left|\frac{2\sqrt{-t} (\frac{x}{y} - \beta)}{(\beta-\sqrt{-t}) (\frac{x}{y} - \sqrt{-t})} \right|,
$$
whence the inequalities
$$
|\beta-\sqrt{-t}|< \sqrt{16t^{2}+17t}
$$
and 
$$
\left|\frac{x}{y} - \sqrt{-t} \right| < \sqrt{5 t} 
$$
(recall that $|x/y| < 2 \sqrt{t}$) imply
$$
\left|\omega - \frac{\xi(x , y)}{\eta(x , y)} \right| > \frac{2 }{5 \sqrt{80t^{2}+85t}}.
$$
Since 
$$
|z| = \frac{8P(x , y)}{|\xi^{4}(x , y)|} \leq \frac{8 t^2}{|\xi^{4}(x , y)|},
$$
this, together with (\ref{raccoon}), contradicts Lemma \ref{lipp}.

This shows that there is no solution related to three of the fourth roots of unity (those corresponding to $\beta_1, \beta_{3}$ and $\beta_{4}$). Therefore, there is at most a single solution to inequality (\ref{2.1}). Together with Propositions \ref{btw} and \ref{tvw}, this completes the proof of Theorem \ref{main}.

\section{Acknowledgments} 
The author would like to thank Professor Michael Bennett for encouragement, careful reading and various insightful comments and suggestions.


 \end{document}